\newcommand{\bt}{\mathbf{t}}
\newcommand{\bw}{\omega}
\newcommand{\bx}{\mathbf{x}}
\newcommand{\abs}[1]{\bigl\lvert #1 \bigr \rvert }
\newcommand{\nats}{{\mathbb N}}
\newcommand{\ints}{{\mathbb Z}}
\newcommand{\abp}{\rho^{\mbox{ab}}}
\newcommand{\p}{\rho}
\def\tribo{ {\bf t }}
\newtheorem{theorem}{Theorem}[section]
\newtheorem{lemma}[theorem]{Lemma}
\newtheorem{corollary}[theorem]{Corollary}
\theoremstyle{definition}	 
\newtheorem{definition}[theorem]{Definition}
\newtheorem{remark}[theorem]{Remark}
\newcommand{\ignore}[1]{}
\begin{document}

\title{Abelian Properties of Words\\{\normalsize (Extended abstract)}}
\author{Gw{\'e}na{\"e}l Richomme%
\footnote{Universit\'e de Picardie Jules Verne, Laboratoire MIS (Mod\'elisation, Information, Syst\`emes), 33, Rue Saint Leu, F-80039 Amiens cedex 1, FRANCE,
E-mail: \texttt{gwenael.richomme@u-picardie.fr}}
\and 
Kalle Saari%
\footnote{Department of Mathematics,
University of Turku,
FI-20014, Finland,
email: \texttt{kasaar@utu.fi}}
\footnote{Work supported by the Finnish Academy under grant 8206039.}
\and
Luca Q. Zamboni%
\footnote{
Universit\'e de Lyon, Universit\'e Lyon 1, CNRS UMR 5208 Institut Camille Jordan, B\^atiment du
Doyen Jean Braconnier, 43, blvd du 11 novembre 1918, F-69622 Villeurbanne Cedex, France, email: \texttt{zamboni@math.univ-lyon1.fr}
\indent Reykjavik University, School of Computer Science, Kringlan 1, 103 Reykjavik, Iceland,
email: \texttt{lqz@ru.is}}
\footnote{Work partially supported by grant no. 090038011 from the Icelandic Research Fund}
}

\maketitle

\abstract{We say that two finite words $u$ and $v$ are abelian equivalent if and only if they have the same number of occurrences of each letter, or equivalently if they define the same Parikh vector. In this paper we investigate various abelian properties of words including abelian complexity, and abelian powers.
We study the abelian complexity of the Thue-Morse word and the Tribonacci word, and answer an old question of G.~Rauzy by exhibiting a class of words whose abelian complexity is everywhere equal to $3.$ 
We also investigate abelian repetitions in words and show that any infinite word with bounded abelian complexity contains abelian $k$-powers for every positive integer $k$. 
}

\section{\label{overview}Introduction}

It appears that very little is known on the abelian complexity of an infinite word~\cite{CovHed,KaboreTapsoba2008TIA,Rauzy2}. In fact, to the best of our knowledge, this paper may be the first time that the very notion of abelian complexity is formally defined. This abstract provides a comprehensive study of the abelian complexity of an infinite word and its connection with other well-known word combinatorial notions. As it is intended as an extended abstract, most of the proofs of the results are omitted.

We begin with a brief introduction outlining the key definitions relevant to the paper. We assume a certain familiarity with the basic notions in Combinatorics on Words.
In Section 3 we provide extremal values for the abelian complexity. In Section 4 we
discuss a fundamental link between abelian complexity and balance of an infinite word, recalling in particular results concerning Sturmian words. 
Then in Section 5 we provide two answers to an old question of Rauzy by exhibiting two different classes of words whose abelian complexity is everywhere equal to $3.$ Sections 6 and 7 are devoted to the study of the abelian complexity of the Thue-Morse word and the
Tribonacci word. In Section~8 we investigate a connection between abelian complexity
and the presence of abelian $k$-powers. In particular, as a consequence of
the well-known van der Waerden's theorem, we deduce that an infinite word having bounded abelian complexity contains an abelian $k$-power for every positive integer $k.$  Section 9 contains a detailed study of abelian powers in Sturmian words,
the Thue-Morse word and the Tribonacci word. 

%


\section{\label{s:def}Definition of the abelian complexity}

We assume the reader is familiar with basic results and notions of combinatorics on words (for
further information see, \textit{e.g.}, \cite{ChoKar1997,Lothaire2002,Pytheas2002}). Given an \textit{alphabet} $A$, that is a finite non-empty set, we denote by $A^*$, $A^\nats$ and $A^\ints$ respectively the set of finite words, the set of (right) infinite words and the set of biinfinite words over $A$. For a finite word $u =a_1a_2\ldots a_n$ with $n \geq 0$ (when $n = 0$, $u$ is the \textit{empty word} $\varepsilon$) and $a_i \in A$, $n$ is called the {\it length} of the word $u$ and denoted $|u|.$ For each $a\in A,$ let $|u|_a$  denote the number of occurrences of the letter $a$ in $u.$ Two words $u$ and $v$ in $A^*$ are said to be {\it abelian equivalent,} denoted $u\sim_{\mbox{ab}} v,$  if and only if $|u|_a=|v|_a$ for all $a\in A.$ It is readily verified that $\sim_{\mbox{ab}}$ defines an equivalence relation on $A^*.$ 


Let $\omega$ be an {\it infinite word} on the alphabet $A,$ that is
$\omega= \omega _0\omega_1\ldots$ with each $\omega_i$ in $A.$
Any finite word of the form $\omega_{i}\omega_{i+1}\cdots \omega_{i+n-1}$ (with $i \geq 0$) is called a {\it factor} of $\omega.$ 
Let ${\mathcal F}_{\omega}(n)$ denote the set of all factors of $\omega$ of length $n,$ and set $\p_{\omega}(n)=\mbox{Card}({\mathcal F}_{\omega}(n)).$ The function $\p_{\omega}:\nats \rightarrow \nats$ is called the {\it subword complexity function} of $\omega.$
Analogously we define ${\mathcal F}^{\mbox{ab}}_{\omega}(n)={\mathcal F}_{\omega}(n)/\sim_{\mbox{ab}}$ and set 
$$\abp_{\omega}(n) =\mbox{Card}({\mathcal F}^{\mbox{ab}}_{\omega}(n)).$$

\begin{definition}
The function
$\abp =\abp_{\omega} :\nats \rightarrow \nats$ which counts the number of pairwise non abelian equivalent factors of $\omega$ of length $n$ is called the {\it abelian complexity} or {\it ab-complexity} for short. 
\end{definition}

In most instances, the alphabet $A$ will consist of the numbers $\{0,1,2,\ldots,$ $k-1\}.$  In this case, for each $u\in A^*,$ we denote by
$\Psi(u)$ the {\it Parikh vector} associated to $u,$ that is 
\[\Psi(u)=(|u|_0,|u|_1,|u|_2,\ldots ,|u|_{k-1}).\]


Given an infinite word $\omega \in A^\nats$ we set
\[\Psi_{\omega}(n)=\{\Psi(u)\,|\, u \in {\mathcal F}_{\omega}(n)\}\] so that
$$\abp_{\omega}(n)=\mbox{Card}(\Psi_{\omega}(n)).$$

\section{\label{S:extremalValues}Extremal values}

A natural question concerns the possible extremal values of the abelian complexity. 
The following result due to Coven and Hedlund is a characterization of periodic words in terms of abelian complexity:

\begin{lemma}[E.M. Coven and G.A. Hedlund, {\cite[Remark 4.07]{CovHed}}]
\label{L:aperiodic}
Let $\omega\in A^\nats \cup A^{\ints}$
be a right infinite or a biinfinite word. Then  
$\omega$ is periodic  of period $p$ if and only if 
$\abp_{\omega}(p)=1$. 
\end{lemma}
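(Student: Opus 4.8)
\emph{Proof approach.} The plan is to establish the two implications separately; the forward direction is essentially immediate, and the converse reduces to a single cancellation argument for Parikh vectors.

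For the forward implication, assume $\omega$ has period $p$. Then $\omega$ is a one-sided (resp.\ two-sided) infinite concatenation of copies of a fixed word $u$ with $|u|=p$, and consequently every factor of $\omega$ of length $p$ is a cyclic conjugate of $u$. Cyclic conjugation merely rearranges the letters of a word without altering the number of occurrences of any letter, so every length-$p$ factor has the same Parikh vector, namely $\Psi(u)$. Hence $\Psi_\omega(p)=\{\Psi(u)\}$ and $\abp_\omega(p)=1$.

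For the converse, suppose $\abp_\omega(p)=1$, i.e.\ all factors of $\omega$ of length $p$ share one common Parikh vector. Fix an admissible index $i$ (any $i\ge 0$ if $\omega\in A^\nats$, any $i\in\ints$ if $\omega\in A^\ints$) and consider the two ``consecutive'' length-$p$ factors $x=\omega_i\omega_{i+1}\cdots\omega_{i+p-1}$ and $y=\omega_{i+1}\omega_{i+2}\cdots\omega_{i+p}$, which exist because an infinite or biinfinite word has factors of every length. By hypothesis $\Psi(x)=\Psi(y)$. Writing $w=\omega_{i+1}\cdots\omega_{i+p-1}$ and using that the Parikh vector is additive on concatenations, we obtain $\Psi(\omega_i)+\Psi(w)=\Psi(w)+\Psi(\omega_{i+p})$, hence $\Psi(\omega_i)=\Psi(\omega_{i+p})$. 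Since the Parikh vector of a single letter $a$ is the standard basis vector indexed by $a$, this equality forces $\omega_i=\omega_{i+p}$. As $i$ was arbitrary, $\omega_i=\omega_{i+p}$ for every admissible $i$, which is precisely the statement that $\omega$ is periodic of period $p$.

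The argument is short and there is no serious obstacle; the only points demanding a little care are handling the one-sided and two-sided cases uniformly (the index $i$ ranges over $\nats$ or $\ints$ accordingly, and the degenerate case $p=1$, where $w$ is empty, is covered as well) and recording explicitly that $\Psi$ is a monoid homomorphism from $A^*$ to $\nats^{|A|}$, so that the cancellation step $\Psi(xw)=\Psi(x)+\Psi(w)$ is legitimate.
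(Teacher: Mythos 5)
Your proof is correct, but for the converse you take a genuinely different route from the paper. The paper argues by contraposition through special factors: a non-periodic word contains arbitrarily long right special factors $u$, and since $u\alpha$ and $u\beta$ have distinct Parikh vectors this forces $\abp_\omega(n)\geq 2$ for \emph{all} $n\geq 1$. You instead give a direct sliding-window cancellation: comparing the Parikh vectors of the two overlapping length-$p$ factors starting at positions $i$ and $i+1$ and cancelling the common middle block $w$ yields $\omega_i=\omega_{i+p}$ for every admissible $i$. Your argument is more elementary and self-contained --- it avoids invoking the (Morse--Hedlund-flavoured) fact about arbitrarily long right special factors, it treats the one-sided and two-sided cases uniformly, and it delivers the period $p$ itself rather than mere non-periodicity; note that the special-factor route, taken literally for right infinite words, only separates periodic from non-ultimately-periodic words (e.g.\ $10^\infty$ has no right special factors yet is not periodic), so it needs a little extra care that your cancellation step sidesteps. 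What the paper's approach buys in exchange is the stronger global statement that an aperiodic word satisfies $\abp_\omega(n)\geq 2$ for every $n$, which is the form of the lemma actually reused elsewhere (e.g.\ in the discussion of Sturmian words).
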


The ``only if'' part  is immediate. 
The converse follows from the observation that
a non-periodic word $\omega$ must contain arbitrarily long right special factors implying $\abp_\omega(n) \geq 2$ for all $n \geq 1$. (Let us recall that a word $u$ is a \textit{right} \textit{special factor} of an infinite word $\omega$ if for two different letters $\alpha$ and $\beta$, 
the words $u\alpha$ and $u\beta$  are both factors of $\omega$.) 

Lemma 3.1 may be regarded as the abelian analogue of the celebrated  result of M. Morse,
G.A. Hedlund \cite{MorHed1940} stating that a biinfinite word is periodic if and only if its subword complexity is bounded. Hence both  the subword complexity and the ab-complexity may be used to characterize non-periodic biinfinite words. The situation for right infinite words is slightly different since in this case infinite words with bounded complexity correspond to \textit{ultimately periodic} words (that is words of the form $uv^\infty$ where $v^\infty$ denotes the \textit{periodic} word with \textit{period} $|v|$ obtained concatenating infinitely often $v$). In the rest of the paper, we will state results only concerning right infinite words 
although many of these results remain true in the context of biinfinite words.





\medskip

Concerning the maximal abelian complexity, it is clear that it is reached by any infinite word containing all finite words as factors, as for instance the \emph{Champernowne word}
(which is obtained by concatenating all finite words enumerated with  respect to the radix order). Let us denote $\abp_{\rm max}$ the abelian complexity of such a word. Since, for any word $u$ of length $n$ over a $k$-letter alphabet, $\Psi(u)$ is a $k$-tuple $(i_1, \ldots, i_k)$ with $n = i_1 + i_2 + \cdots + i_k$, $\abp_{\rm max}$ is the maximum number of ways to write $n$ 
as the sum of $k$ nonnegative integers. This well-known number (see, \textit{e.g.}, \cite{WeissteinWeb}) is called the number of compositions of $n$ into $k$ parts and  is given by the binomial coefficient 
$ {n + k - 1 \choose  k -1}$.
This can be summarized as follows:

\begin{theorem}
For all infinite words $\bw$ over a $k$-letter alphabet, and for all $n \geq 0$,
$$
1 \leq \abp_\bw(n) \leq   {\ n+k-1 \choose k-1}.  
$$
In particular, the ab-complexity is bounded by $O(n^k)$.
\end{theorem}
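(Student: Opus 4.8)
The plan is to establish the two inequalities separately, since the lower bound is essentially trivial and the upper bound is a counting argument already sketched in the surrounding text.

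For the lower bound $\abp_\bw(n) \geq 1$, I would simply note that for every $n \geq 0$ the word $\bw$ has at least one factor of length $n$ (namely the prefix $\bw_0\bw_1\cdots\bw_{n-1}$), so $\Psi_\bw(n)$ is nonempty and hence $\card(\Psi_\bw(n)) \geq 1$. For $n = 0$ one has the single empty factor with Parikh vector $(0,\ldots,0)$, so the bound holds there too.

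For the upper bound, I would observe that if $u \in {\mathcal F}_\bw(n)$ then $\Psi(u) = (|u|_0, |u|_1, \ldots, |u|_{k-1})$ is a $k$-tuple of nonnegative integers summing to $n$. Hence $\Psi_\bw(n)$ injects into the set $C_{n,k} = \set{(i_1,\ldots,i_k) \in \nats^k}{i_1 + \cdots + i_k = n}$ of compositions (weak compositions) of $n$ into $k$ parts, giving $\abp_\bw(n) \leq \card(C_{n,k})$. The standard stars-and-bars argument then gives $\card(C_{n,k}) = \binom{n+k-1}{k-1}$: a weak composition corresponds bijectively to a choice of $k-1$ bar positions among $n+k-1$ slots. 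I would likely just cite this as the well-known count of compositions, as the text already does, rather than reprove it. Finally, for fixed $k$ the quantity $\binom{n+k-1}{k-1}$ is a polynomial in $n$ of degree $k-1$, hence is $O(n^k)$ (indeed $O(n^{k-1})$), which justifies the concluding remark.

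There is no real obstacle here; the only points requiring any care are handling the degenerate cases $n=0$ and $k=1$ (where the bound reads $1 \leq \abp_\bw(0) \leq 1$, consistent with the empty word being the unique factor of length $0$), and being clear that the map $u \mapsto \Psi(u)$ factors through $\sim_{\mbox{ab}}$ so that counting Parikh vectors is the same as counting abelian classes, which is exactly the identity $\abp_\bw(n) = \card(\Psi_\bw(n))$ recorded in Section~\ref{s:def}.
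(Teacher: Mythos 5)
Your proposal is correct and follows essentially the same route as the paper: the lower bound is immediate from the existence of at least one factor of each length, and the upper bound comes from observing that each Parikh vector is a weak composition of $n$ into $k$ parts, of which there are $\binom{n+k-1}{k-1}$ by the standard count. Your added observation that this is in fact $O(n^{k-1})$ is a harmless (and correct) sharpening of the paper's stated $O(n^k)$ remark.
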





We end this section with two examples illustrating some key differences between the behavior of the subword complexity and the abelian complexity.
The first one was first pointed out to us by P. Arnoux \cite{Arnoux2008}: Let $\omega$ 
denote the morphic image of the Champernowne word 
\[{\mathcal C} =01101110010111011110001001\ldots \]
 under the Thue-Morse morphism $\mu$ defined by $0\mapsto 01$ and $1\mapsto 10$ 
 Then while $\p_{\omega}(n)$ has exponential growth, we will see in Section~\ref{S:TM} that $\abp_{\omega}(n)\leq 3$ for all $n.$ 
 
The second example is to be contrasted with the first one: There exist binary infinite words having maximal abelian complexity but  linear subword complexity. Indeed let $f$ and $g$ be the morphisms defined by $f(a) = abc$, $f(b) = bbb$, $f(c) = ccc$, $g(a) = 0 = g(c)$ and $g(b) = 1$. 
Let $\omega $ denote the fixed point of $f$ beginning in $a.$ 
Then the image of $\omega$ under $g$ is the word $$\displaystyle 0\prod_{i \geq 0} 1^{3^i}0^{3^i}.$$
It is readily verified that $\abp_\bw = \abp_{\rm max}$.
Since $w$ is an automatic sequence, it has  linear complexity (see Theorems 6.3.2 and 10.3.1 in \cite{AlloucheShallit2003book}).

%

\section{\label{sec:abSturm}Links with balance properties}

In this section we investigate a connection between abelian complexity and the notion of balance: 
Following \cite{CassFerZam} we say that an infinite word $\omega \in A^\nats$ is $C$-{\it balanced}  ($C$ a positive integer) if $||U|_a-|V|_a|\leq C$ for all $a\in A$ and all factors $U$ and $V$ of $\omega$ of equal length. A word $\omega$ is said to be {\it balanced} if it is $1$-balanced. It is easy to see that 

\begin{lemma}\label{bounded=balanced}
For a word $\bw \in A^\nats \cup A^\ints$, the function $\abp_\bw$ is bounded if and only if $\bw$ is $C$-balanced for some positive integer $C$.
\end{lemma}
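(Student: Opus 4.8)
The plan is to prove the equivalence by establishing each implication directly from the definitions, using the Parikh-vector description of the abelian complexity. Throughout I work with a word $\bw$ over the $k$-letter alphabet $A$, and I identify $\abp_\bw(n)$ with $\card\bigl(\Psi_\bw(n)\bigr)$ as in Section~\ref{s:def}.

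First I would handle the easy direction: if $\bw$ is $C$-balanced, then $\abp_\bw$ is bounded. Fix $n$ and pick any reference factor $U$ of length $n$, say the prefix $\bw_0\cdots\bw_{n-1}$. By $C$-balance, for every factor $V$ of length $n$ and every letter $a$ we have $\bigl||V|_a-|U|_a\bigr|\le C$, so each coordinate of $\Psi(V)$ lies in an interval of length $2C+1$ determined by $\Psi(U)$. Hence $\Psi_\bw(n)$ is contained in a box with at most $(2C+1)^k$ lattice points, giving $\abp_\bw(n)\le(2C+1)^k$ independently of $n$.

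For the converse, suppose $\abp_\bw$ is bounded, say $\abp_\bw(n)\le M$ for all $n$; I want a uniform $C$ with $\bigl||U|_a-|V|_a\bigr|\le C$ for all equal-length factors $U,V$ and all $a\in A$. The key observation is a telescoping/continuity argument: if $U$ and $V$ are factors of length $n$, I can interpolate between them by a chain of length-$n$ factors in which consecutive members are windows of $\bw$ shifted by one position, each such one-step shift changing $|U|_a$ by at most $1$ for each $a$. So it suffices to bound, for each $a$, the total ``oscillation'' of the sequence $i\mapsto |\bw_i\cdots\bw_{i+n-1}|_a$; but a priori this oscillation can grow with $n$, so I cannot argue naively. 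Instead I would bound the \emph{range} of each coordinate directly: let $m_a(n)=\min_{|U|=n}|U|_a$ and $M_a(n)=\max_{|U|=n}|U|_a$. A standard subadditivity argument shows $M_a$ is subadditive and $m_a$ is superadditive (concatenating an extremal length-$p$ window with an extremal length-$q$ window), so $M_a(n)/n$ and $m_a(n)/n$ converge to a common limit iff the gaps stay bounded — and here is where boundedness of $\abp_\bw$ enters: if $M_a(n)-m_a(n)$ were unbounded for some $a$, then for $n$ with $M_a(n)-m_a(n)\ge M$, the first coordinate-$a$ alone already forces at least $M+1$ distinct values among the $\Psi(U)$ with $|U|=n$ obtainable along the interpolating chain (by the intermediate-value property of one-step shifts), hence $\abp_\bw(n)\ge M+1$, a contradiction. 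Therefore $M_a(n)-m_a(n)\le M-1$ for every $n$ and every $a$, which is exactly $(M-1)$-balance, so we may take $C=M-1$.

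The main obstacle is the converse direction, specifically making the ``intermediate value'' step rigorous: one must note that for factors $U$ and $V$ of the \emph{same} length $n$ occurring at positions $i<j$ in $\bw$, the windows of length $n$ starting at $i,i+1,\dots,j$ form a path in which each step alters every letter count by at most one, so the single coordinate $|\cdot|_a$ takes \emph{every} integer value between $|U|_a$ and $|V|_a$; since these windows have pairwise distinct Parikh vectors whenever their $a$-coordinates differ, a range of size exceeding $M$ in coordinate $a$ yields more than $M$ distinct Parikh vectors. (For the biinfinite case the same argument applies verbatim; one only needs occurrences of both factors, which biinfiniteness supplies on both sides.) Everything else is bookkeeping with subadditivity and finitely many letters.
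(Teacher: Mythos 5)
Your proof is correct: the $(2C+1)^k$ box bound gives one direction, and the sliding-window intermediate-value argument gives $(M-1)$-balance from $\abp_\bw(n)\le M$; the paper omits the proof of this lemma entirely (``it is easy to see that''), and your argument is exactly the standard one the authors have in mind. The only blemish is the subadditivity digression in the converse, which is never actually used (and the claimed equivalence between bounded gaps and the limits of $M_a(n)/n$ and $m_a(n)/n$ coinciding is not true as stated, since the gaps could be unbounded yet sublinear) --- the intermediate-value count along the chain of shifted windows carries the whole argument on its own, with the minor bookkeeping remark that one should take $C=\max(M-1,1)$ so that $C$ is a positive integer.
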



%



%

\noindent 
Let us recall that \textit{Sturmian} words are precisely the binary
aperiodic balanced words, where \textit{aperiodic} means non ultimately periodic (see \cite{BerSee2002}).
As noticed by G.~Rauzy \cite{Rauzy2}, it is a consequence of the works by E.M.~Coven and G.A.~Hedlund that a word $\omega$ is Sturmian if and only if for all $n \geq 0$ the cardinality of the set $\{ |u|_1 \mid u \in {\mathcal F}_{\omega}(n)\}$ is $2$. In other words, we have the following characterization which is the earliest result we know involving the notion of abelian complexity.

\begin{theorem}[E.M. Coven, G.A. Hedlund, \cite{CovHed}]\label{T:Sturm}
Let $W$ be an aperiodic binary right infinite word. Then $W$ is balanced (i.e., $W$ is a Sturmian word) if and only if $\abp(n)=2$ for all $n\geq 1.$
\end{theorem}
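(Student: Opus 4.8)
The plan is to prove both implications by relating the abelian complexity $\abp_W(n)$ to the quantity $\max\{|u|_1 : u\in\mathcal{F}_W(n)\}-\min\{|u|_1 : u\in\mathcal{F}_W(n)\}$, which for a binary word determines the Parikh-vector count: over the alphabet $\{0,1\}$, a factor $u$ of length $n$ is determined up to abelian equivalence by $|u|_1$ alone, so $\abp_W(n)$ equals the cardinality of the set $S_n = \{|u|_1 : u\in\mathcal{F}_W(n)\}$. Since the numbers $|u|_1$ for $u$ ranging over factors of a fixed length $n$ form an \emph{interval} of integers — sliding a length-$n$ window one position changes $|u|_1$ by at most $1$, and $\mathcal{F}_W(n)$ is ``connected'' in this sense — we have $\abp_W(n) = \max S_n - \min S_n + 1$. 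Thus $\abp_W(n)=2$ for all $n\geq 1$ is equivalent to saying $\max S_n-\min S_n = 1$ for all $n$, i.e.\ that for every length $n$ there are exactly two values of $|u|_1$ and they are consecutive.

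For the ``only if'' direction, suppose $W$ is Sturmian, i.e.\ aperiodic and $1$-balanced. Balance says $||U|_1-|V|_1|\leq 1$ for all factors $U,V$ of equal length, so $\max S_n - \min S_n \leq 1$, giving $\abp_W(n)\leq 2$. On the other hand, $W$ is aperiodic, so by Lemma~\ref{L:aperiodic} it is not periodic of any period, hence $\abp_W(n)\geq 2$ for all $n\geq 1$ (this is exactly the fact recalled after Lemma~\ref{L:aperiodic}: a non-periodic word has arbitrarily long right special factors, forcing $\abp_W(n)\geq 2$). Combining, $\abp_W(n)=2$ for all $n\geq 1$.

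For the ``if'' direction, suppose $\abp_W(n)=2$ for all $n\geq 1$. First, $W$ is aperiodic by hypothesis. It remains to show $W$ is $1$-balanced. Take any two factors $U,V$ of the same length $n$; then $|U|_1,|V|_1\in S_n$, and since $\mbox{Card}(S_n)=2$ we get $|U|_1-|V|_1\in\{-1,0,1\}$, so $||U|_1-|V|_1|\leq 1$, and since $|U|_0 = n-|U|_1$ the same bound holds for the letter $0$. Hence $W$ is $1$-balanced, and being aperiodic and binary it is Sturmian.

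The main obstacle — and the one genuinely needing care — is the reduction $\abp_W(n) = \max S_n - \min S_n + 1$, i.e.\ that $S_n$ is an interval of consecutive integers; everything else is then immediate from the definition of balance and from Lemma~\ref{L:aperiodic}. The argument for it is the standard ``discrete intermediate value'' observation: if $u$ and $u'$ are length-$n$ factors occurring (as, say, prefixes of long enough factors of $W$) at positions differing by $d$, then interpolating through the $d$ intermediate windows changes $|\cdot|_1$ by steps of size at most $1$, so every integer between $|u|_1$ and $|u'|_1$ is realized; one should phrase this carefully so it applies to an arbitrary pair of factors, not just two windows in a single long block, but since $W$ is a single infinite word this presents no real difficulty.
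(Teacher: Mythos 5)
Your proof is correct and complete. The paper itself gives no proof of Theorem~\ref{T:Sturm} --- it is cited from Coven and Hedlund, with the surrounding text only noting Rauzy's reformulation that $W$ is Sturmian iff the set $\{|u|_1 : u\in {\mathcal F}_W(n)\}$ has cardinality $2$ for all $n$ --- and your argument supplies exactly the missing details in the expected way: the identification of $\abp_W(n)$ with $\mathrm{Card}(S_n)$, the discrete intermediate-value observation that $S_n$ is an interval (which is what makes ``two values'' equivalent to ``two consecutive values'', i.e.\ to $1$-balance), and Lemma~\ref{L:aperiodic} to rule out $\abp_W(n)=1$ under the aperiodicity hypothesis.
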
 

Let us note that I.~Kabor\'e and T. Tapsoba also characterized using abelian complexity the family of so-called quasi-Sturmian words by insertion 
 (a subclass of the class of infinite words over a three-letter alphabet having subword complexity $n+2$) \cite{KaboreTapsoba2008TIA}. These words defined over a ternary alphabet verify $\abp(n) = 2$ for $n\neq 0$ even and $\abp(n) = 4$ for $n \neq 1$ odd.

\section{\label{S:Rauzy}Two Answers to a Question of G. Rauzy}

Inspired by the characterization of Sturmian words of Theorem~\ref{T:Sturm}, G.~Rauzy asked whether there exist aperiodic words on a $3$-letter alphabet such that $\abp(n)=3$ for all $n\geq 1$. 
%
Let $p \geq 3$ be any integer, let $\omega'$ be any Sturmian word over $\{0,1\}$ and let 
$\omega = (p-1)(p-2)\cdots 2\omega'$ ($\omega$ is written over the alphabet $\{0, 1, \ldots, (p-1)\}$. As a consequence of Theorem~\ref{T:Sturm}, we can see that $\abp_\omega(n) = p$ for all $n \geq 1$ (in particular when $p = 3$). This provides a first answer to G.~Rauzy's question. Nevertheless it is not completely satisfactory since $w$ is not recurrent (an infinite word is \textit{recurrent} if each of its factors occur infinitely often in $w$). 
We end this section by exhibiting two families of uniformly recurrent words whose ab-complexity is everywhere equal to~$3$.
Next results will provide answers including uniformly recurrent word
(let us recall that an infinite word is \textit{uniformly recurrent} if each of its factors occurs infinitely often with bounded gaps).
The first one generalizes partially Theorem~\ref{T:Sturm}.

\begin{theorem}\label{T:answer Rauzy 1} Let $\omega $ be an aperiodic balanced word on a $3$-letter alphabet. Then the ab-complexity $\abp_{\omega}(n)=3$ for all $n\geq 1.$
\end{theorem}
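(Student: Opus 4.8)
The plan is to show that for an aperiodic balanced word $\omega$ on a $3$-letter alphabet, the Parikh vectors of length-$n$ factors occupy exactly three points, for every $n \geq 1$. First, since $\omega$ is balanced it is $1$-balanced, so by Lemma~\ref{bounded=balanced} the ab-complexity is bounded; in fact $1$-balancedness says that for each letter $a$, the set $\{\,|u|_a \mid u \in \mathcal{F}_\omega(n)\,\}$ is a set of consecutive integers of cardinality at most $2$. Combined with the constraint $|u|_0 + |u|_1 + |u|_2 = n$, this already forces $\Psi_\omega(n)$ to lie in a small grid: after translating by the coordinatewise minimum, each coordinate of $\Psi(u) - \mathbf{m}_n$ is $0$ or $1$, and the three coordinates sum to a constant $r_n \in \{0,1,2,3\}$. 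Thus $\abp_\omega(n) = \mbox{Card}(\Psi_\omega(n))$ is at most $\binom{3}{r_n}$, which is $1$ or $3$ depending on whether $r_n \in \{0,3\}$ or $r_n \in \{1,2\}$. So the whole problem reduces to two things: ruling out $\abp_\omega(n) = 1$, and ruling out the possibility that, when $r_n \in \{1,2\}$, fewer than all three of the relevant grid points are actually attained.

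The first reduction is easy: by Lemma~\ref{L:aperiodic}, $\abp_\omega(n) = 1$ would force $\omega$ to be periodic of period $n$, contradicting aperiodicity; hence $\abp_\omega(n) \geq 2$ for all $n \geq 1$, and therefore $r_n \in \{1,2\}$ for all $n$. It remains to show that all $\binom{3}{r_n} = 3$ grid points occur. I would argue by a connectedness/intermediate-value principle on the sequence of Parikh vectors of the ``sliding window'' of length $n$: as one slides a length-$n$ window one position to the right along $\omega$, the Parikh vector changes by $e_a - e_b$ (remove letter $b$ at the left, add letter $a$ at the right), or stays put if $a = b$. So consecutive window-Parikh-vectors differ by such a ``unit'' move. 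Since the set of attained vectors lies in the plane $\sum x_i = n$ and (after translation) in $\{0,1\}^3$ restricted to coordinate-sum $r_n$, one checks that the only way a $\pm(e_a - e_b)$ step can keep us inside this set is to move between two of the three grid points at the ``$r_n$-level.'' The key point is then: the window sequence attains at least two of the three points (since $\abp \geq 2$), and whenever it attains two of them, the geometry of single-letter swaps forces it to pass through, or be able to reach, the third — because from any grid point at level $r_n$ there is no ``shortcut,'' every unit move either leaves the admissible set or lands on one of the other two points, and a word that realized only two of the three would be confined, which I expect to contradict either aperiodicity or the recurrence structure forced by $1$-balancedness.

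The main obstacle is precisely this last step: showing that exactly two of the three grid points cannot occur. Ruling out $\abp_\omega(n) = 2$ for all large $n$ is the heart of the matter, because a priori a balanced ternary word could behave, for some lengths, like a ``disguised Sturmian'' word using only two Parikh vectors. I would attack this by supposing $\abp_\omega(N) = 2$ for some $N$ and analyzing which pair of translated grid points $\{p, q\} \subseteq \{0,1\}^3 \cap \{\sum x_i = r_N\}$ is realized. Any two such points differ by $e_a - e_b$ for a specific pair of letters $a, b$, so $|u|_c$ is constant over all length-$N$ factors $u$, for the third letter $c$. This means the positions of the letter $c$ in $\omega$ are ``perfectly balanced'' at scale $N$; a standard argument (essentially the Coven–Hedlund argument behind Lemma~\ref{L:aperiodic}, applied to the indicator sequence of $c$, or to the projection erasing $c$) should show that this forces eventual periodicity of $\omega$ or of a projection in a way incompatible with aperiodicity and with $\omega$ being genuinely ternary and balanced. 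Once every length has $r_n \in \{1,2\}$ and all three grid points realized, we conclude $\abp_\omega(n) = 3$ for all $n \geq 1$, which is the claim.
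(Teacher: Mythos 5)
Your reductions are sound as far as they go: $1$-balancedness does confine the Parikh vectors of length-$n$ factors to the three lattice points $\mathbf{m}_n+e_0,\mathbf{m}_n+e_1,\mathbf{m}_n+e_2$ (or their complements), giving $\abp_\omega(n)\leq 3$, and Lemma~\ref{L:aperiodic} rules out $\abp_\omega(n)=1$. You have also correctly identified the crux: if $\abp_\omega(N)=2$, the two realized vectors differ by $e_a-e_b$, so $|u|_c$ is constant over all length-$N$ factors for the third letter $c$. But the resolution you offer for this case is where the proof genuinely breaks. Applying the Coven--Hedlund argument to the indicator sequence of $c$ yields only that this indicator sequence is periodic of period $N$; that conclusion is \emph{not} by itself ``incompatible with aperiodicity,'' since a ternary word can easily be aperiodic while the occurrence set of one of its letters is periodic. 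What you actually need is the statement: \emph{a balanced word over three letters, all occurring, in which one letter occupies a periodic set of positions, must be ultimately periodic.} This is true, but it is a structural fact about balanced words, not a formal consequence of Coven--Hedlund applied to a projection; your phrase ``a standard argument should show'' is covering precisely the nontrivial content of the theorem.

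This missing step is exactly what the paper outsources: its proof consists of invoking Hubert's characterization of aperiodic balanced words over $\geq 3$ letters, namely that such a word is obtained from a Sturmian word over $\{a,b\}$ by replacing the occurrences of $a$ and of $b$ by ``constant gap'' periodic sequences over two disjoint subalphabets. From that structure one reads off that every letter of $\omega$ has irrational frequency (hence no letter can have a periodic indicator sequence, i.e., no letter count is constant at any window length $N$), which kills the $\abp_\omega(N)=2$ case and completes the argument. So either cite Hubert as the paper does, or supply a self-contained proof of the italicized statement above; without one or the other, the proposal is incomplete at its decisive step.
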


Theorem 5.1 is a consequence of a characterization of aperiodic balanced words due to P. Hubert in~\cite{Hu}.
%

The next theorem illustrates that the converse of Theorem~\ref{T:answer Rauzy 1} does not hold:

\begin{theorem}\label{T:answer Rauzy 2} Let $\omega'\in \{0,1\}^\nats$ be any aperiodic infinite word, and let $\omega$ be the image of $\omega'$ under the morphism $f$ defined by
$0\mapsto  012,$ and $ 1\mapsto  021.$ Then $\abp_{\omega}(n)=3$ for all $n\geq 1.$
\end{theorem}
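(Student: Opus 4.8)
The plan is to analyze directly the Parikh vectors of length-$n$ factors of $\omega = f(\omega')$, where $f(0)=012$, $f(1)=021$. First I would record the two basic facts about $f$: every letter of $\omega$ sits inside a unique block $f(\omega'_i)$, and since both $f(0)$ and $f(1)$ are abelian equivalent to $(1,1,1)$, any factor of $\omega$ of length exactly $3k$ that is a concatenation of $k$ consecutive blocks has Parikh vector $(k,k,k)$. The key structural observation is that a factor $u$ of length $n$ is determined (for abelian purposes) by its length and by the position at which it starts within a block (offset $0$, $1$, or $2$): writing $n = 3q + r$ with $r\in\{0,1,2\}$, a window of length $n$ covers $q$ full blocks plus a "head" of some suffix of a block and a "tail" of some prefix of a block, with the head+tail contributing length $3+r$ or $r$ depending on alignment. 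So the Parikh vector of $u$ equals $(q,q,q)$ plus a correction term that depends only on (a) the starting offset and (b) which of the four block-boundary words $f(x)f(y)$, $x,y\in\{0,1\}$, the overhang falls in.

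Next I would enumerate, for each residue $r\in\{0,1,2\}$ and each starting offset, the possible correction vectors. The content of a head (proper nonempty suffix of $012$ or $021$) and a tail (proper prefix) is drawn from a very small explicit list: the prefixes of $012$ are $\varepsilon,0,01$ and of $021$ are $\varepsilon,0,02$; the suffixes are $\varepsilon,2,12$ and $\varepsilon,1,21$. Combining a suffix of one block with a prefix of the next block, and remembering that consecutive blocks are $f(\omega'_i)f(\omega'_{i+1})$ with $\omega'_i\omega'_{i+1}$ any factor of $\omega'$ of length $2$ — and here is where aperiodicity enters: an aperiodic binary word contains $00$ or $11$, hence all of $00,01,10$ or all of $11,10,01$ occur, so at least three of the four pairs $xy$ occur as factors — I would tabulate the resulting set $\Psi_\omega(n)$. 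The claim is that for each $n\ge 1$ this set has exactly three elements. The $\le 3$ direction is the finite case-check just described; the $\ge 3$ direction uses that enough distinct block-pairs and offsets occur to realize three distinct correction vectors, which is where the hypothesis that $\omega'$ is aperiodic (not merely infinite) is actually used.

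The main obstacle I anticipate is bookkeeping the overhang alignment cleanly: when a length-$n$ window starts at offset $s\in\{0,1,2\}$ inside a block, the window ends at offset $(s+n)\bmod 3$ inside some later block, and the head is the length-$(3-s)$ suffix (if $s\ne 0$) while the tail is the length-$((s+n)\bmod 3)$ prefix, so the number of full interior blocks is $q$ or $q-1$ according to whether these fit exactly; getting the parametrization right, and making sure no offset/block-pair combination is double-counted or missed, is the delicate part. I would organize this by fixing $r = n \bmod 3$ at the outset (three cases $r=0,1,2$, with $r=0$ being the cleanest since then head and tail lengths sum to $0$ or $3$), and within each case listing the at most $3\times 4 = 12$ raw correction vectors, then collapsing to three distinct values and checking all three are attained. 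A small additional point to handle: for very small $n$ (say $n=1,2$) one should check the count directly rather than through the general alignment argument, since the "$q$ full blocks" picture degenerates; but this is immediate from $\{0,1,2\}\subseteq\mathcal F_\omega(1)$ and the six length-$2$ factors.
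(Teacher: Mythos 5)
The paper, being an extended abstract, omits its proof of this theorem entirely, so there is nothing to compare against line by line; judged on its own terms, your decomposition is the natural one and the upper bound $\abp_\omega(n)\le 3$ does go through as you describe: writing a factor as $\sigma f(w)\pi$ with $\sigma$ a proper suffix of a block and $\pi$ a proper prefix of a block, the possible Parikh vectors of $\sigma$ are $(0,0,0),(0,0,1),(0,1,0),(0,1,1)$ and of $\pi$ are $(0,0,0),(1,0,0),(1,1,0),(1,0,1)$, and for each residue of $n$ modulo $3$ the resulting candidate vectors collapse to exactly three (e.g.\ for $n=3q$ they are $(q,q,q)$, $(q,q-1,q+1)$, $(q,q+1,q-1)$).

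The gap is in your lower bound, specifically in how aperiodicity is invoked. You argue via length-two factors of $\omega'$ (``at least three of the four pairs $xy$ occur''), but the head $\sigma$ lives in the block $f(x)$ \emph{preceding} the $q-1$ interior full blocks and the tail $\pi$ in the block $f(y)$ \emph{following} them; $x$ and $y$ are adjacent in $\omega'$ only when there are no interior blocks. For $n=3q$ with $q\ge 2$, realizing the vector $(q,q-1,q+1)$ requires $\sigma=2$ (so $x=0$) and $\pi=02$ (so $y=1$), i.e.\ a factor of $\omega'$ of length $q+1$ that \emph{begins with $0$ and ends with $1$}, and symmetrically $(q,q+1,q-1)$ requires a length-$(q+1)$ factor beginning with $1$ and ending with $0$. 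Knowing which length-two factors occur says nothing about this for $q\ge2$. The statement you actually need is: for every $q\ge1$ an aperiodic binary word has factors of length $q+1$ of both forms $0\cdots1$ and $1\cdots0$. This is true but needs its own argument: if no factor $0\cdots1$ of length $q+1$ occurs, then $\omega'_i=0$ implies $\omega'_{i+q}=0$ for all $i$, so along each residue class modulo $q$ the word is of the form $1^k0^\infty$ or $1^\infty$, forcing $\omega'$ to be ultimately periodic with period $q$; symmetrically for the other form. With that lemma in place (and the degenerate small-$n$ checks you already flag), your plan closes. Note also that for $n\not\equiv 0\pmod 3$ the three vectors are realized using only the fact that both letters occur in $\omega'$, so the mod-$0$ case is genuinely the only place aperiodicity is needed — your outline does not isolate this.
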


 It would be interesting to find a characterization of all recurrent infinite words with constant ab-complexity equal to $3.$  A related question is to find
a recurrent word with constant ab-complexity equal to $4.$ We suspect no such word exists. Using~\cite{Hu}, it can be shown that there does not exist a recurrent balanced
word with constant ab-complexity equal to $4.$

\section{\label{S:TM}The ab-complexity of the Thue-Morse word}

In Section~\ref{S:extremalValues}, we announced that the image of the Champernowne word under the Thue-Morse morphism~$\mu$ has an ab-complexity bounded by $3$. More generally:

\begin{theorem}\label{T:caracAbCompTM} 
The abelian complexity of an aperiodic word $\bw \in \{0,1\}^\nats$ is 
$$\left\{
\begin{tabular}{l}
$\abp_\bw(n) = 2$ for $n$ odd,\\
$\abp_\bw(n) = 3$ for $n \neq 0$ even,\\
\end{tabular}\right.$$
if and only if there exists a word $\bw'$ such that $\bw = \mu(\bw')$, $\bw = 0\mu(\bw')$ or $\bw = 1\mu(\bw')$.
\end{theorem}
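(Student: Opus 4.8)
The plan is to prove each implication separately, and in both directions the key will be to understand how the Thue-Morse morphism $\mu$ acts on Parikh vectors of factors. Write $\delta(u) = |u|_0 - |u|_1$ for a binary word $u$; since $|u|_0 + |u|_1 = |u|$ is fixed on factors of a given length, controlling $\abp_\bw(n)$ is the same as controlling the set $D_\bw(n) = \set{\delta(u)}{u \in \mathcal F_\bw(n)}$. The crucial elementary observation is that $\mu(a)$ has exactly one $0$ and one $1$ for each letter $a$, so $\delta(\mu(u)) = 0$ for every word $u$; consequently, if $w = v_0v_1v_2\cdots$ and we cut out a factor of even length $2m$ starting at an even position, it is exactly $\mu$ of a length-$m$ factor of $\bw'$ and has $\delta = 0$, while a factor of even length starting at an odd position is $b\,\mu(x)\,c$ for a length-$(m-1)$ factor $x$ and letters $b,c$, giving $\delta \in \{-2,0,2\}$; a factor of odd length $2m+1$ is obtained from one of these by deleting or adjoining a single boundary letter, shifting $\delta$ by $\pm 1$.

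For the ``if'' direction I would argue as follows. Assume $\bw = \mu(\bw')$ (the two other cases differ only by a harmless finite prefix adjustment and are handled the same way, since prepending one letter shifts the parity-of-position bookkeeping but not the final counts). Using the case analysis above: for $n = 2m$ even and nonzero, every factor has $\delta \equiv 0 \pmod 2$ and $|\delta| \le 2$, so $D_\bw(n) \subseteq \{-2,0,2\}$, giving $\abp_\bw(n) \le 3$; one then checks $0 \in D_\bw(n)$ always (factors at even positions) and that $2$ and $-2$ are both attained, using that $\bw'$ is aperiodic hence contains both $00$ (or a right special factor) forcing occurrences of $\mu$-images flanked by the appropriate letters — here one invokes that an aperiodic word contains arbitrarily long right special factors, as recalled after Lemma~\ref{L:aperiodic}, to produce factors of $\bw'$ that when mapped by $\mu$ and shifted yield $\delta = \pm 2$. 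For $n = 2m+1$ odd, $D_\bw(n) \subseteq \{-1,1\}$ by the parity argument (every odd-length binary word has odd $\delta$) together with the bound $|\delta| \le$ something; but more simply, $\bw$ is aperiodic so $\abp_\bw(n) \ge 2$ by the remark after Lemma~\ref{L:aperiodic}, and the parity constraint plus a bound $|\delta(u)| \le 1$ on odd-length factors of a $\mu$-image forces $\abp_\bw(n) = 2$. Establishing that sharp bound $|\delta| \le 1$ for odd length (equivalently $|\delta| \le 2$ for even length) is exactly the statement that $\mu(\bw')$ is $2$-balanced, which follows directly from the decomposition $b\,\mu(x)\,c$ since $\delta(\mu(x)) = 0$.

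For the ``only if'' direction, suppose $\bw$ is aperiodic with $\abp_\bw(n) = 2$ for odd $n$ and $= 3$ for even $n \ge 2$. I must produce $\bw'$ with $\bw \in \{\mu(\bw'), 0\mu(\bw'), 1\mu(\bw')\}$. The standard route is to show $\bw$ is in the image of $\mu$ up to a one-letter prefix, i.e. that $\bw$ (after possibly deleting a leading letter) has the property that it is a concatenation of blocks $01$ and $10$; equivalently $\bw$ has no factor $000$ or $111$ and the occurrences of $00$ and $11$ sit at a consistent parity of positions. The condition $\abp_\bw(2) = 3$ forces all three Parikh vectors $(2,0),(1,1),(0,2)$ to occur, so $00$ and $11$ are both factors. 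I would then use $\abp_\bw(3) = 2$: the length-$3$ Parikh vectors have $\delta \in \{-3,-1,1,3\}$, and having only two classes, combined with the presence of $00$ and $11$ (hence $\delta$-values $\ge 1$ and $\le -1$ realized nearby), forces $D_\bw(3) = \{-1,1\}$, i.e. $000$ and $111$ are \emph{not} factors — this is the $2$-balance in disguise. Next, one shows the factor $00$ can only occur at positions of one fixed parity (and likewise $11$), which is what makes the de-substitution well defined: if $00$ occurred at both an even and an odd position, one could exhibit a length-$4$ factor with $\delta = \pm 2$ at the ``wrong'' parity leading (after extending by one letter on the correct side) to a length-$4$ or length-$5$ factor violating $\abp_\bw(4) = 3$ or $\abp_\bw(3)=2$. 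Once $00$ and $11$ are pinned to a single parity of positions, deleting at most the first letter of $\bw$ makes every length-$2$ block at even position equal to $01$ or $10$, which is precisely saying the result is $\mu(\bw')$ for a unique $\bw'$; aperiodicity of $\bw$ transfers to $\bw'$ since $\mu$ is injective and non-erasing.

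The main obstacle I anticipate is the parity-rigidity step in the ``only if'' direction: proving that all occurrences of $00$ lie at positions of a single fixed parity (and similarly for $11$). The balance-type consequences ($\delta$ bounded by $2$ for even lengths) come quickly from the small-$n$ hypotheses, but upgrading ``no $000$, no $111$'' to ``the $2$-blocks respect a global parity'' requires a more careful combinatorial argument — tracking how an occurrence of $00$ at an even position versus an odd position would create, after suitable one-letter extensions, a fourth abelian class in some $\mathcal F_\bw(n)$ with $n$ even, contradicting $\abp_\bw(n) = 3$. I would organize this as a short lemma: in an aperiodic binary word that is $2$-balanced with $\abp(n)=3$ for even $n$, the sets of starting positions of $00$ and of $11$ are each contained in one residue class mod $2$, and moreover they are the \emph{same} class; the proof is a finite case check on windows of length at most $5$.
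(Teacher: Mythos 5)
The paper gives no proof of Theorem~\ref{T:caracAbCompTM} (it is an extended abstract), so there is no published argument to compare against; I can only assess your plan on its own terms. Your ``if'' direction is essentially correct: the decomposition of factors of $\mu(\bw')$ as $\mu(x)$ or $b\,\mu(x)\,c$ gives the upper bounds, aperiodicity gives $\abp_\bw(n)\geq 2$, and for even $n=2m$ the values $\delta=\pm 2$ are indeed attained --- though the clean reason is not right special factors but the observation that an aperiodic $\bw'$ must contain positions $j$ with $(\bw'_j,\bw'_{j+m})=(1,0)$ and positions with $(0,1)$, since otherwise each residue class modulo $m$ would be monotone, hence eventually constant, making $\bw'$ ultimately periodic. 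Your appeal to right special factors does not obviously produce the required flanking letters, but the claim you need is true and easily proved this way.

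The genuine gap is in the ``only if'' direction, exactly at the step you flag: parity rigidity of the occurrences of $00$ and $11$. Your proposed resolution --- ``a finite case check on windows of length at most 5'' --- cannot work, because two occurrences of $00$ at positions of different parities may be arbitrarily far apart and no bounded window sees both; in a configuration such as $\cdots 00(10)^b11\cdots$ the offending pair is only detected by a factor of length about $2b$. Relatedly, the assertion that the needed balance bounds ``come quickly from the small-$n$ hypotheses'' is not right: you must first prove, by induction on $n$ using the hypothesis at \emph{every} length, that $\set{\delta(u)}{u\in{\mathcal F}_{\bw}(n)}$ equals $\{-1,1\}$ for all odd $n$ and $\{-2,0,2\}$ for all even $n\geq 2$ (each such set is an interval of values of fixed parity, since sliding a window changes $\delta$ by $0$ or $\pm 2$, and each value at length $n$ must extend by $\pm 1$ into the set at length $n+1$ and conversely). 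Once this holds for all $n$, parity rigidity follows from a global rather than local argument: the prefix sums $s_i=\delta(w_0\cdots w_{i-1})$ satisfy $|s_j-s_i|\leq 2$, so they take at most three consecutive values $m,m+1,m+2$; since $s_i\equiv i\pmod 2$, the positions with $s_i=m+1$ form one full residue class modulo $2$, and any occurrence of $00$ or $11$ at position $i$ forces $s_i\in\{m,m+2\}$, hence forces $i$ into the other class. On the complementary class every block $w_iw_{i+1}$ has $\delta=0$, i.e.\ lies in $\{01,10\}$, which is exactly the desired $\mu$-decomposition after deleting at most one leading letter. So your skeleton is right, but the key lemma requires this all-lengths balance statement plus a global prefix-sum (or equivalent) argument, not a bounded case check.
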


As a direct consequence we get the ab-complexity of the Thue-Morse word $\mathbf{TM}_0$, 
the fixed point of~$\mu$  beginning in $0$. 

\begin{theorem} \label{T:Thue-Morse ab-complexity} 
$\abp_{\mathbf{TM}_0}(n)=2$ for $n$ odd and $\abp_{\mathbf{TM}_0}(n)=3$ for $n \neq 0$ even.
\end{theorem}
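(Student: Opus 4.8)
The plan is to derive this as an immediate corollary of Theorem~\ref{T:caracAbCompTM}. The Thue-Morse word $\mathbf{TM}_0$ is by definition the fixed point of $\mu$ beginning in $0$, so $\mathbf{TM}_0 = \mu(\mathbf{TM}_0)$; in particular $\mathbf{TM}_0$ is of the form $\mu(\bw')$ with $\bw' = \mathbf{TM}_0$. To invoke Theorem~\ref{T:caracAbCompTM} I must also check that $\mathbf{TM}_0$ is aperiodic. This is classical: the Thue-Morse word is overlap-free (it contains no factor of the form $auaua$ with $a$ a letter and $u$ a word), hence contains no factor $ww$ repeated enough times to force ultimate periodicity; alternatively one cites that a periodic word would have bounded subword complexity, whereas $\p_{\mathbf{TM}_0}$ is known to be unbounded, or one simply appeals to Lemma~\ref{L:aperiodic} together with the easy fact that $\abp_{\mathbf{TM}_0}(p) \geq 2$ for every $p$. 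Either way, aperiodicity is standard and I would state it in one line with a reference to a combinatorics-on-words text such as \cite{Lothaire2002} or \cite{AlloucheShallit2003book}.

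Once aperiodicity is in hand, the characterization of Theorem~\ref{T:caracAbCompTM} applies directly: since $\mathbf{TM}_0 = \mu(\mathbf{TM}_0)$ falls under the case $\bw = \mu(\bw')$, we conclude that $\abp_{\mathbf{TM}_0}(n) = 2$ for $n$ odd and $\abp_{\mathbf{TM}_0}(n) = 3$ for $n \neq 0$ even. This completes the proof.

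There is essentially no obstacle here beyond correctly citing Theorem~\ref{T:caracAbCompTM}; the entire mathematical content lives in that theorem. If one wanted a self-contained argument not routing through the general characterization, the alternative would be to compute the Parikh vectors of length-$n$ factors of $\mathbf{TM}_0$ directly: write $n = 2m$ or $n = 2m+1$, observe that any factor of length $2m$ that starts at an even position is a concatenation of $m$ blocks $\mu(a_i)$, each contributing exactly one $0$ and one $1$, so its Parikh vector is $(m,m)$; factors starting at an odd position differ from these by deleting one letter at the front and appending one at the back, which perturbs the count of $0$'s by at most one, giving Parikh vectors in $\{(m-1,m+1),(m,m),(m+1,m-1)\}$, and one then shows all three (resp. exactly two, in the odd-length case) actually occur by exhibiting explicit factors. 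But since Theorem~\ref{T:caracAbCompTM} is available, the short deduction above is the right proof to present.
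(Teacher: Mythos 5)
Your proposal is correct and matches the paper's approach exactly: the paper derives this theorem as a direct consequence of Theorem~\ref{T:caracAbCompTM}, using precisely the observation that $\mathbf{TM}_0 = \mu(\mathbf{TM}_0)$ is aperiodic and hence falls under the case $\bw = \mu(\bw')$ of the characterization. Your one-line verification of aperiodicity and the optional self-contained Parikh-vector computation are fine additions but not needed.
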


It is quite remarkable that the previous result 
follows only from
the action of the Thue-Morse morphism. Let us note that a similar situation holds when considering the proof of Theorem~\ref{T:answer Rauzy 2}.

Note also that Theorem~\ref{T:caracAbCompTM} characterizes the class of all words having the same abelian complexity as the Thue-Morse word. 
It is known~(\cite{AbeBrl2}) that every recurrent infinite word  $\omega \in \{0,1\}^\nats$ whose subword complexity is equal to that of $\mathbf{TM}_0$ is either in the shift orbit closure of $\mathbf{TM}_0$ or is in the shift orbit closure of $\Delta (\mathbf{TM}_0)$ where $\Delta$ is the letter doubling morphism defined by $0\mapsto 00$ and $1\mapsto 11.$ 
As a consequence we deduce that:

\begin{corollary}\label{C:More on Thue-Morse} A binary infinite word has the same subword complexity and ab-complexity as the Thue-Morse word if and only if it is in the shift orbit closure of $\mathbf{TM}_0.$
\end{corollary}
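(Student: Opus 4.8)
The plan is to prove the two implications separately, using minimality of the Thue-Morse subshift for the forward direction and the quoted theorem of Allouche and Bell \cite{AbeBrl2}, combined with an elementary computation of $\abp$ at length $3$, for the converse.

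\textbf{($\Leftarrow$).} Suppose $\bw$ lies in the shift orbit closure of $\mathbf{TM}_0$. Since $\mathbf{TM}_0$ is the fixed point of the primitive morphism $\mu$, it is uniformly recurrent, so the subshift it generates is minimal; hence every word in its shift orbit closure has exactly the same set of factors as $\mathbf{TM}_0$, i.e. ${\mathcal F}_{\bw}(n)={\mathcal F}_{\mathbf{TM}_0}(n)$ for all $n$. Consequently $\p_{\bw}=\p_{\mathbf{TM}_0}$ and, after passing to abelian equivalence classes, $\abp_{\bw}=\abp_{\mathbf{TM}_0}$. (Alternatively: sorting the shifts of $\mathbf{TM}_0$ according to the parity of the exponent and using $\mathbf{TM}_0=\mu(\mathbf{TM}_0)$ shows that every element of the orbit closure has the form $\mu(\bw')$, $0\mu(\bw')$ or $1\mu(\bw')$ with $\bw'$ aperiodic, so Theorem~\ref{T:caracAbCompTM} applies; but the minimality argument is shorter.)

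\textbf{($\Rightarrow$).} Suppose $\p_{\bw}=\p_{\mathbf{TM}_0}$ and $\abp_{\bw}=\abp_{\mathbf{TM}_0}$. Then $\p_{\bw}$ is unbounded, so $\bw$ is aperiodic; granting that $\bw$ is also recurrent, the theorem of Allouche and Bell puts $\bw$ in the shift orbit closure of $\mathbf{TM}_0$ or in that of $\Delta(\mathbf{TM}_0)$. I would then rule out the second case by separating the two orbit closures through a single value of $\abp$. Since $\mathbf{TM}_0$ is overlap-free it contains neither $000$ nor $111$, so its length-$3$ factors realize only the Parikh vectors $(2,1)$ and $(1,2)$ and $\abp_{\mathbf{TM}_0}(3)=2$; on the other hand $\Delta$ sends the factors $00$ and $11$ of $\mathbf{TM}_0$ to $0000$ and $1111$, so $\Delta(\mathbf{TM}_0)$---and, being uniformly recurrent, every word in its shift orbit closure---has $000$ and $111$ among its length-$3$ factors, hence realizes all four Parikh vectors $(3,0),(2,1),(1,2),(0,3)$ and has abelian complexity $4$ at length $3$. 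Therefore $\bw$ is not in the orbit closure of $\Delta(\mathbf{TM}_0)$, so it lies in that of $\mathbf{TM}_0$.

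I expect the main obstacle to be the step I glossed over: that the two complexity hypotheses together force $\bw$ to be recurrent, which is what allows \cite{AbeBrl2} to be applied. The natural route is again Theorem~\ref{T:caracAbCompTM}, which desubstitutes $\bw$ as $\mu(\bw')$ (or $0\mu(\bw')$, $1\mu(\bw')$) with $\bw'$ aperiodic; one then has to analyse how $\mu$ transports the subword complexity, check that $\p_{\bw'}=\p_{\mathbf{TM}_0}$ as well so that the argument iterates, and conclude---via the forced agreement of the length-$2^k$ prefixes of $\bw$ with those of $\mathbf{TM}_0$ or of its complement $\mu(\mathbf{TM}_0)$---that $\bw$ already belongs to the orbit closure of $\mathbf{TM}_0$ and is in particular recurrent. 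This complexity-under-$\mu$ bookkeeping is the one genuinely technical point; short of it, the clean option is to state the corollary for recurrent binary words, in which case the first two paragraphs already give a complete proof.
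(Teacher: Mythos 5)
Your proof follows essentially the same route as the paper, which deduces the corollary directly from the Aberkane--Brlek result quoted just before it (for the ``only if'' direction) together with the abelian-complexity data (your length-$3$ computation, or equivalently Theorem~\ref{T:caracAbCompTM}) to exclude the orbit closure of $\Delta(\mathbf{TM}_0)$, and minimality of the Thue--Morse subshift for the converse. The recurrence issue you flag is genuine but is present in the paper as well: the corollary is stated for arbitrary binary infinite words while the result of \cite{AbeBrl2} is quoted only for recurrent ones, so your explicit caveat is at least as careful as the extended abstract itself.
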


To end this section, let us observe that Theorem~\ref{T:caracAbCompTM} is false when the aperiodicity hypothesis is removed. 
Indeed observe first that the word $(01)^\infty = \mu(0^\infty)$ does not have the same abelian complexity as the Thue-Morse word. 
Secondly, we let the reader verify that the ultimately periodic word $0110(1001)^\infty$ has 
the same abelian complexity as $\mathbf{TM}_0$.

\section{\label{S:tribo}The ab-complexity of the Tribonacci word}

As we have already seen, the notion of abelian complexity has many links 
to the notion of $C$-balance. 
The results of the previous section
are partially due to the fact that the image under the Thue-Morse morphism of any recurrent infinite word is $2$-balanced. 
We now investigate the ab-complexity of another well-known $2$-balanced word, the so-called 
  {\it Tribonacci word}
 \[\tribo = \tau^{\omega}(0)= 01020100102010 \cdots\]
defined as the unique fixed point of the morphism $\tau$
\[0\mapsto 01\,\,\,\,\,\,1\mapsto 02\,\,\,\,\,\,2\mapsto 0.\]

\begin{theorem}\label{triboabcom}
Let $\abp_{\tribo}$ denote the ab-complexity of the Tribonacci word $\tribo.$ Then, $\abp_{\tribo} (n) \in \{3,4,5,6,7\}$ for every positive integer $n.$
Moreover, each of these five values is assumed.
\end{theorem}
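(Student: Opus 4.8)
The plan is to reduce the computation of $\abp_{\tribo}(n)$ to an analysis of the Parikh vectors of prefixes of $\tribo$, since for any recurrent word the set $\Psi_{\tribo}(n)$ is captured by differences $\Psi(\tribo_0\cdots\tribo_{i+n-1}) - \Psi(\tribo_0\cdots\tribo_{i-1})$ over $i \geq 0$. First I would recall (or reprove) that $\tribo$ is $2$-balanced, so that each coordinate of the difference of two length-$n$ Parikh vectors lies in a window of size at most $3$; combined with the constraint that the three coordinates sum to $n$, this already confines $\Psi_{\tribo}(n)$ to a bounded region and gives a crude finite upper bound. The real work is to pin the count down to the window $\{3,4,5,6,7\}$. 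For this I would exploit the substitutive structure: using the incidence matrix $M$ of $\tau$ and the fact that $\Psi(\tau(u)) = M\,\Psi(u)$, one can relate the Parikh vectors of factors of $\tribo$ of length roughly $|\tau^k(a)|$ to Parikh vectors of shorter factors, setting up a desubstitution recursion on $n$.

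The key steps, in order, would be: (1) establish the $2$-balance of $\tribo$ and deduce that $\Psi_{\tribo}(n)$ sits inside an explicitly described set of lattice points (an intersection of the simplex $i_0+i_1+i_2=n$ with three slabs of width $\le 3$ determined by the ``expected'' frequencies $n/t$ where $t$ is the Tribonacci constant); (2) partition the positive integers into a small number of congruence-type classes governed by the Tribonacci numeration system (the lengths $|\tau^k(0)|$ satisfying the recurrence $T_k = T_{k-1}+T_{k-2}+T_{k-3}$), and for $n$ in each class determine exactly which of the candidate lattice points are actually realized by some factor; (3) carry out the desubstitution: write a length-$n$ factor $w$ of $\tribo$ as $\tau(w') $ bordered by short prefixes/suffixes of images of single letters, so that $\Psi(w)$ is $M\,\Psi(w')$ plus a bounded correction term from the borders, and track how the finitely many border types interact with the finitely many values of $\Psi(w')$; (4) assemble the case analysis to show the count is always between $3$ and $7$, and exhibit explicit lengths $n$ (small ones should suffice, found by direct inspection of a prefix of $\tribo$) achieving each of the values $3,4,5,6,7$.

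The main obstacle I expect is step (3) together with the bookkeeping in step (2): the desubstitution does not act cleanly on the length parameter $n$ (a factor of length $n$ desubstitutes to a factor of length about $n/t$, but the exact value and the border corrections depend on where the factor sits relative to the $\tau$-block structure), so the recursion naturally lives over the Tribonacci numeration system rather than over $n$ directly, and one must control the carry behaviour of that numeration. Making the finitely many cases genuinely finite — i.e., proving that beyond some threshold the pattern of realized Parikh vectors stabilizes within each numeration class — is the crux; once that stabilization is in hand, the bounds $3 \le \abp_{\tribo}(n) \le 7$ and the attainment of all five values follow by a finite (if tedious) verification, which in an extended abstract one would summarize rather than display in full.
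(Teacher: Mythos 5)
Your overall plan ($2$-balance plus a desubstitution recursion over the Tribonacci numeration system) is far heavier than what is needed, and the step you yourself identify as the crux --- proving that the pattern of realized Parikh vectors stabilizes within each numeration class under desubstitution --- is exactly the step you do not carry out, so as written this is a program rather than a proof. The paper obtains both bounds with no desubstitution at all. For the lower bound, it uses the fact that $\tribo$ has a unique right special factor $\tribo^<_{n-1}$ of each length $n-1$ and that all three extensions $\tribo^<_{n-1}0$, $\tribo^<_{n-1}1$, $\tribo^<_{n-1}2$ are factors of $\tribo$; their Parikh vectors are pairwise distinct, so $\abp_{\tribo}(n)\geq 3$. (Your outline never isolates an argument for the lower bound $3$; aperiodicity alone only gives $2$ via Coven--Hedlund.) For the upper bound, the paper combines $2$-balance with this same triple ${\rm Central}(n)$: every vector of $\Psi_{\tribo}(n)$ must lie within max-norm distance $2$ of each of the three central vectors, which confines $\Psi_{\tribo}(n)$ to twelve lattice points on the plane $i_0+i_1+i_2=n$, and the further requirement that any two realized vectors be within distance $2$ of \emph{each other} forces $\Psi_{\tribo}(n)$ into one of a few hexagonal (cardinality $7$) or triangular (cardinality $6$) subsets. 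This is a short geometric packing argument, not a recursion, and it is where the specific bound $7$ comes from; nothing in your sketch produces that constant.

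A second concrete problem: your expectation that ``small'' lengths, found by direct inspection of a prefix, suffice to witness all five values is false. The smallest $n$ with $\abp_{\tribo}(n)=5$ is $30$, with $\abp_{\tribo}(n)=6$ is $342$, and with $\abp_{\tribo}(n)=7$ is $3914$; the paper finds these by computer search. This does not invalidate machine verification as a method, but the ``finite (if tedious) verification'' at the end of your plan is substantially larger than you anticipate, and in particular the attainment of the value $7$ cannot be dismissed as a routine inspection of a short prefix.
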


\begin{proof}
It is well-known that for all $n \geq 1$, $\tribo$ has exactly one right special factor of length $n-1$, and that, for this special factor that we denote $\tribo^<_{n-1}$, the three words $\tribo^<_{n-1}0, \tribo^<_{n-1}1,$ and $\tribo^<_{n-1}2$ are each factors of $\tribo$ of length $n$. 
Define non-negative integers $i,j,k$ by
$\Psi(\tribo^<_{n-1})=(i,j,k)$. Setting 
$${\rm Central}(n) = \{(i+1,j,k), (i,j+1,k),(i,j,k+1)\}$$
we have 
\begin{equation}\label{suBIS}{\rm Central}(n) \subseteq \Psi_{\tribo}(n).\end{equation}

Given a vector $\overrightarrow{v} = (\alpha, \beta, \gamma)$, let denote $||\overrightarrow{v}|| = 
\max(|\alpha|, |\beta|, |\gamma|)$. Observe that the set of vectors $\overrightarrow{v}$ such that $||\overrightarrow{v}  - \overrightarrow{u}|| \leq 2$ for all $\overrightarrow{u}$ in ${\rm Central}(n)$ is described by the graph of Figure~\ref{graph}    (where vectors are vertices of the graph, and each edge $(\overrightarrow{u}, \overrightarrow{v})$ denotes the fact that  $||\overrightarrow{v}  - \overrightarrow{u}|| = 1$).

Since $\tribo$ is 2-balanced, $\Psi_\tribo(n)$ is a subset of this set of twelve vectors. Moreover for the same reason, we should have $||\overrightarrow{v}  - \overrightarrow{u}|| \leq 2$ for all $\overrightarrow{u}$, $\overrightarrow{v}$ in $\Psi_\tribo(n)$. This implies that the only possibility for $\Psi_\tribo(n)$ is to be a subset of one of the three sets delimited by a regular hexagon in Figure~\ref{graph}, or one of the three sets delimited by an equilateral triangle of base length 2. These sets have cardinalities 7 and 6 respectively showing that $\abp_\omega(n) \leq 7$.

\begin{figure}\begin{center}
\includegraphics[height=6cm]{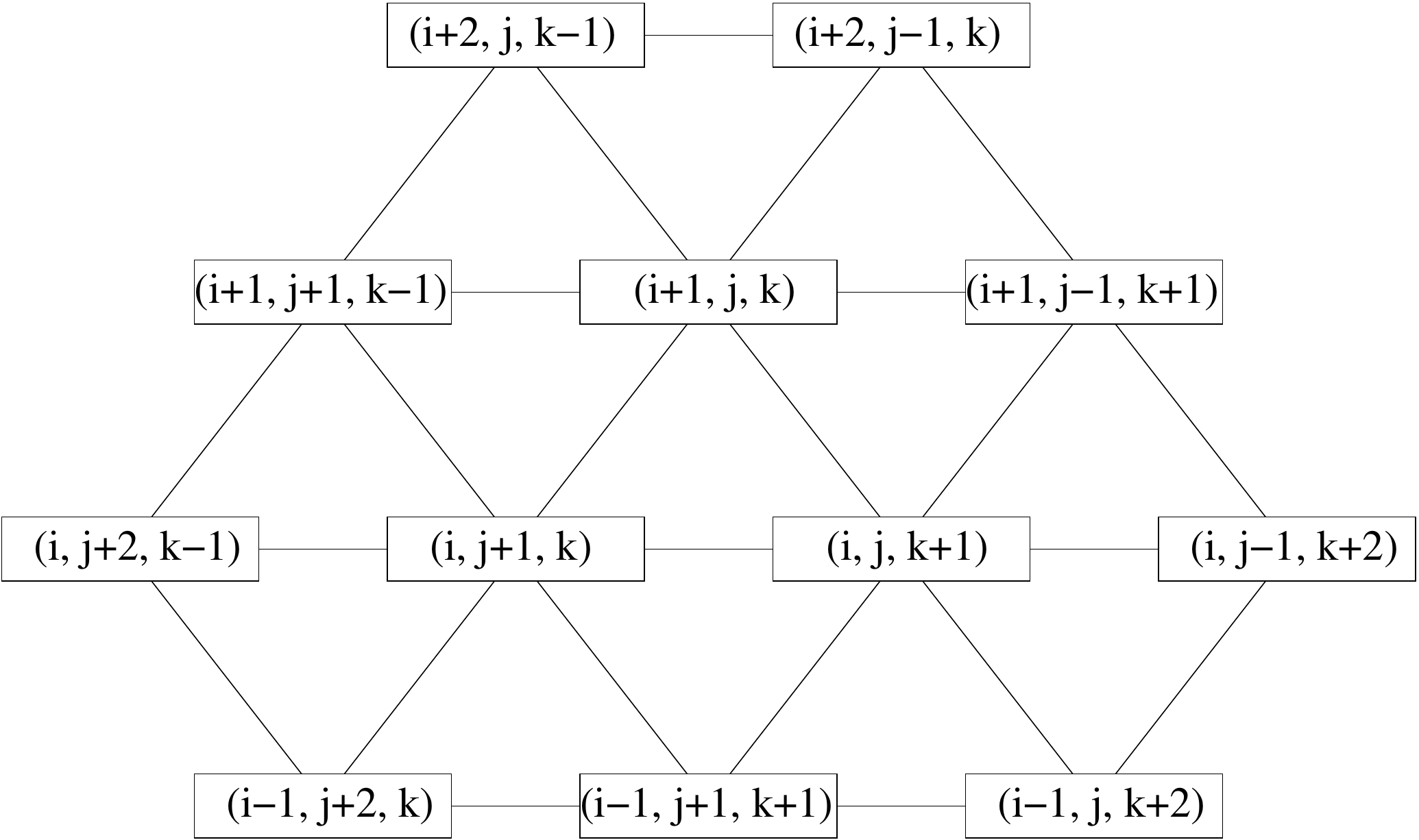}
\caption{\label{graph}Links between Parikh vectors}\end{center}
\end{figure}

\medskip

By computer simulation we find that
\[(\abp(n))_{n\geq 1}=3 3 4 3 4 4 4 3 4 4 4 4 4 4 3 4 4 4 4 4 4 4 4 4 4 4 4 3 4 5 5 4 4 4 4 4 5 5 4 4 4 4\ldots\]
In particular, the least $n$ for which $\abp(n)=5$ is for $n=30.$ We also found that the smallest $n$ for which $\abp(n)=6$ is
$n=342,$ and the smallest $n$ for which  $\abp(n)=7$ is $n=3914.$ The next four values of $n$ for which $\abp(n)=7$ are $n= 4063, 4841,  4990, 7199.$ 
\end{proof}

It is surprising to us that the value $\abp_{\tribo}(n)=7$ does not occur until $n=3914,$ but then re-occurs relatively shortly thereafter. There are many interesting and mysterious properties observed in the behavior of the Tribonacci word: For instance, for all $n\leq 184,$ if $U$ and $V$ are factors of $\tribo$ of length $n,$ with $U$ a prefix of $\tribo,$ then $||U|_a-|V|_a|\leq 1,$ for all $a\in \{0,1,2\}.$ But then this fails for $n=185.$ The intererested reader will find in \cite{RichommeSaariZamboni2009tribo} a proof  that $\abp_{\tribo}(n)=3$ if and only if $\tribo$ has a bispecial factor of length $n-1.$ It is also proved in this paper that the abelian complexity of $\tribo$ attains the value $7$ infinitely often. It is an open question to find a proof that values $4$, $5$ and $6$ are also attained infinitely often.



To end this section, we would like to stress the importance of the $2$-balance of the Tribonacci word to prove Theorem~\ref{triboabcom}. Although this result is cited in numerous articles, 
we were unable to find a proof of this fact in the literature. We wrote a combinatorial proof in \cite{RichommeSaariZamboni2009tribo}. We also have a proof of this fact that uses the spectral properties of the adjacency matrix associated to the generating morphism~\cite{RicSaaZam2009prep}.

\section{\label{s:ab-powers}Links with abelian powers}

We now consider abelian powers.
Repetitions occurring in an infinite word is a topic of great interest having applications to a broad range of areas (see, \textit{e.g.}, \cite{AB, AlDaQuZa, AlZa,Lothaire2005}). One stream of research dating back to the works of Thue \cite{Thue1906,Thue1912} is the study of patterns avoidable by infinite words (see, \textit{e.g.}, \cite{Lothaire1983book,Lothaire2002,RichommeSeebold2004IJFCS,RichommeWlazinski2007DAM,BerstelLauveReutenauerSaliola2008book}). 
In the abelian context, F.M.~Dekking \cite{Dekking1979} showed that abelian 4-powers are avoidable on a 2-letter alphabet and that abelian cubes are avoidable on a 3-letter alphabet. V.~Ker\"anen \cite{Keranen1992ICALP} proved that abelian squares are avoidable on four letters. 
Let us recall that an \textit{abelian} $k${\it-power} is any non-empty word on the  form $W=U_1U_2\cdots U_k$ where $U_i\sim_{\mbox{ab}}U_j$ for all $1\leq i,j\leq k.$

\begin{theorem}\label{bounded} 
Any infinite word having bounded abelian complexity contains an abelian $k$-power for every positive integer $k.$
\end{theorem}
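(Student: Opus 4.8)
The plan is to combine the equivalence ``$\abp_\bw$ bounded $\iff$ $\bw$ is $C$-balanced'' (Lemma~\ref{bounded=balanced}) with van der Waerden's theorem, the link between the two being a pigeonhole remark: $C$-balance confines all Parikh vectors of a given length to a small box, uniformly in the length. First I would pass to prefix Parikh vectors. Write $\bw\in A^\nats$ with $\card(A)=d$, and for $n\ge 0$ set $p_n=\Psi(\bw_0\bw_1\cdots\bw_{n-1})\in\nats^d$, so that the Parikh vector of the factor $\bw_i\cdots\bw_{j-1}$ equals $p_j-p_i$. For $b\ge 0$ and $s\ge 1$, the factor $\bw_b\bw_{b+1}\cdots\bw_{b+ks-1}$ is an abelian $k$-power (with parts of length $s$) precisely when the differences $p_{b+js}-p_{b+(j-1)s}$, $j=1,\dots,k$, all coincide; equivalently, when $p_b,p_{b+s},\dots,p_{b+ks}$ form an arithmetic progression in $\ints^d$. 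So it is enough to produce one such pair $(b,s)$.

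Next I would isolate the boxing observation. By Lemma~\ref{bounded=balanced}, $\bw$ is $C$-balanced for some positive integer $C$. Hence for every $s\ge 1$ and every $a\in A$, the integers $|u|_a$ with $u\in\mathcal F_\bw(s)$ lie in an interval of length $C$, so $\Psi_\bw(s)$ is contained in some box $\prod_{a\in A}[\mu_a,\mu_a+C]$. Fix once and for all $m=C+1$. Then any two vectors of $\Psi_\bw(s)$ that agree coordinatewise modulo $m$ must be equal, since their difference has all entries in $[-C,C]$ and divisible by $m>C$; in particular, $\Psi_\bw(s)$ meets the coordinatewise zero class modulo $m$ in at most one vector. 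The point is that this single $m$ works simultaneously for all $s$.

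Finally I would apply van der Waerden's theorem to the colouring $n\mapsto (p_n\bmod m)\in(\ints/m\ints)^d$, which uses only the $m^d$ colours. Given $k\ge 1$, van der Waerden's theorem provides a monochromatic arithmetic progression $b,\,b+s,\,b+2s,\dots,b+ks$ with common difference $s\ge 1$. Then for each $j=0,\dots,k-1$ the jump $p_{b+(j+1)s}-p_{b+js}$ lies in $\Psi_\bw(s)$ and is congruent to $0$ modulo $m$; by the boxing observation all $k$ of these jumps are equal. Thus $\bw_b\bw_{b+1}\cdots\bw_{b+ks-1}$ is a nonempty factor of $\bw$ which is an abelian $k$-power, and since $k$ was arbitrary we are done.

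The only step I expect to require genuine care is the uniformity of $m$ in the boxing observation: it is what keeps the colouring finite, and it is where the full hypothesis is used --- boundedness of $\abp_\bw$ \emph{across all lengths} (equivalently $C$-balance) rather than a bound at each individual length. The rest is bookkeeping; one should double-check only that the monochromatic progression is taken of length $k+1$ (not $k$), so that it produces $k$ consecutive jumps $p_{b+(j+1)s}-p_{b+js}$.
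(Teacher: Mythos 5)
Your proof is correct and follows essentially the route the paper indicates: it invokes Lemma~\ref{bounded=balanced} to get $C$-balance and then applies van der Waerden's theorem to the colouring of positions by prefix Parikh vectors modulo $C+1$, which is exactly the argument the authors allude to when they say the proof ``uses van der Waerden's theorem.'' The key uniformity point you flag (one modulus $m=C+1$ working for all lengths $s$) is handled correctly, so nothing is missing.
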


This theorem could be considered as an abelian analogue of the celebrated result by M.~Morse and G.A.~Hedlund: ``\textit{infinite words with bounded subword complexities are ultimately periodic}". 
The proof of Theorem 8.1 uses van der Waerden's theorem.

Theorem~\ref{bounded} raises  natural questions:
Is it  true that any recurrent infinite word having a bounded abelian complexity has the property that each position begins in an abelian $k$-power?
What about the case of uniformly recurrent word?
Note that here  the requirement that the abelian complexity be bounded is important. Indeed, in \cite{Dekking1979} F.M.~Dekking showed that the fixed point of the morphism $0 \mapsto 011$ and $1 \mapsto 0001$ is  abelian 4-power free (this word is recurrent since the morphism is primitive).

This problem seems  difficult since we do not know the answer even in the special case of the Thue-Morse word.

\section{\label{sec:ab-repet-sturm}Abelian repetitions in Sturmian words}

When considering stronger hypothesis than in Theorem~\ref{bounded}, one can naturally expect to have a stronger result. This is what happens in Theorem~\ref{T:ab-k pow in Sturm} below dealing with Sturmian words, that is by Theorem~\ref{T:Sturm}, words having abelian complexity $2$ everywhere: 

\begin{theorem}[\cite{RichommeSaariZamboni}]\label{T:ab-k pow in Sturm} 
For every Sturmian word $\omega$ and every integer $k\geq 1$, there exist two integers $\ell_1$ and $\ell_2$ such that each position in $\omega$ has an occurrence of an abelian $k$-power with abelian period $\ell_1$ or $\ell_2.$
\end{theorem}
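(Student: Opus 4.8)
The plan is to exploit the precise arithmetic structure of Sturmian words. Recall that every Sturmian word $\omega$ is, up to exchanging the roles of $0$ and $1$, the coding of an irrational rotation by $\alpha$: the letter $\omega_n$ is determined by whether $\{n\alpha + \beta\}$ lies in a half-open interval of length $\alpha$ (or $1-\alpha$). The key elementary fact I would start from is that for a factor $U$ of length $\ell$ occurring at position $n$, the number $|U|_1$ takes exactly one of two consecutive values, say $\lfloor \ell\alpha\rfloor$ or $\lceil \ell\alpha\rceil$, and which one occurs is again governed by whether a point of the orbit falls in a certain subinterval. So controlling abelian $k$-powers of abelian period $\ell$ reduces to understanding, for fixed $\ell$, the set of starting positions $n$ such that the $k$ consecutive blocks $U_1,\dots,U_k$ (each of length $\ell$, starting at $n, n+\ell, \dots, n+(k-1)\ell$) all have the \emph{same} value of $|{\cdot}|_1$; these positions form a union of intervals in the rotation picture, namely preimages under the rotation by $\ell\alpha$ of an appropriate interval.

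The heart of the argument is a clever choice of $\ell_1,\ell_2$ based on continued fractions. I would take $\ell_1 = q_m$ and $\ell_2 = q_{m+1}$ to be two consecutive denominators of convergents of $\alpha$, or more precisely well-chosen multiples thereof so that the relevant $k$ copies fit. The reason this works: when $\ell = q_m$, the quantity $q_m\alpha$ is extremely close to an integer $p_m$ (within $1/q_{m+1}$), so the rotation by $\ell\alpha$ on the circle is a rotation by a very small angle. Iterating it $k$ times still moves points only a tiny amount, so the "bad" set where the $|{\cdot}|_1$-value changes within the block of $k$ copies is small, and by a three-distance-theorem / covering argument the union of "good" intervals over the two choices $\ell_1$ and $\ell_2$ covers the whole circle — hence every position. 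Concretely: for each residue of $\{n\alpha + \beta\}$, either it sits far enough from the boundary of the defining interval that shifting by $q_m\alpha$ (repeated $k$ times) never crosses it, giving an abelian $k$-power of period $q_m$ at that position; or it is close to the boundary, but then because $q_{m+1}\alpha$ is close to an integer with a displacement on the \emph{other} side (the signs of $q_m\alpha - p_m$ and $q_{m+1}\alpha - p_{m+1}$ alternate), the choice $\ell = q_{m+1}$ handles it. One must choose $m$ large enough (depending on $k$) that $k \cdot \|q_m\alpha\|$ and $k\cdot\|q_{m+1}\alpha\|$ are both smaller than the gaps involved.

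In more detail, the steps I would carry out are: (1) fix the rotation model and write down explicitly the function $n \mapsto |U|_1$ for the length-$\ell$ factor at position $n$ as the "discrepancy-rounded" value, identifying the exceptional set $E_\ell \subseteq \mathbb{R}/\mathbb{Z}$ of residues where it takes the larger value; (2) observe that $U_1 \sim_{\mathrm{ab}} U_2 \sim_{\mathrm{ab}} \cdots \sim_{\mathrm{ab}} U_k$ at position $n$ iff the $k$ points $\{n\alpha+\beta\}, \{(n+\ell)\alpha+\beta\}, \dots, \{(n+(k-1)\ell)\alpha+\beta\}$ all lie inside $E_\ell$ or all lie outside it; (3) bound the "width of uncertainty": the set of residues $x$ for which the $k$-tuple $x, x+\ell\alpha, \dots, x+(k-1)\ell\alpha$ is not constant-membership is contained in a neighborhood of the two boundary points of $E_\ell$ of radius $k\|\ell\alpha\|$; (4) choose $\ell_1 = q_m$, $\ell_2 = q_{m+1}$ with $m$ so large that $k(\|q_m\alpha\| + \|q_{m+1}\alpha\|) < \min(\alpha, 1-\alpha)$ (or the relevant gap), and use the opposite signs of the two displacements together with the three-distance theorem to conclude that the good set for $\ell_1$ together with the good set for $\ell_2$ is all of $\mathbb{R}/\mathbb{Z}$; (5) translate back: every position $n$ of $\omega$ begins an abelian $k$-power of abelian period $\ell_1$ or $\ell_2$.

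I expect the main obstacle to be step (4): making the covering argument airtight, i.e. verifying that the two "uncertainty neighborhoods" (around the endpoints of $E_{\ell_1}$ and of $E_{\ell_2}$) can be simultaneously avoided, and that one cannot be unlucky for both choices of period at the same residue. This requires carefully using that $\|q_m\alpha\|$ and $\|q_{m+1}\alpha\|$ not only are small but point in opposite directions relative to the nearest integer, and that the endpoints of $E_{\ell_1}$ and $E_{\ell_2}$ are themselves controlled points of the orbit (they are $\{ -\ell_i\alpha\}$-type quantities). A secondary technical nuisance is handling the degenerate Sturmian words where $\beta$ is rational-related to $\alpha$ (the factor at some position might be "borderline" exactly), but these are a measure-zero family of positions and can be absorbed by slightly enlarging $m$. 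Throughout, the statement only asks for existence of \emph{two} periods rather than one, which is exactly what makes the opposite-sign trick necessary and sufficient; a single period cannot work in general because of positions sitting right at a boundary for that period.
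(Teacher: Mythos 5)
Your proposal is correct and follows essentially the same route as the proof in the cited full version: in the rotation coding the length-$m$ block at position $i$ has $|u|_1=\lfloor (i+m)\alpha+\rho\rfloor-\lfloor i\alpha+\rho\rfloor$, so position $i$ begins an abelian $k$-power of period $m$ exactly when the $k$ successive additions of $\{m\alpha\}$ either all wrap around $1$ or none do, i.e.\ when $\{i\alpha+\rho\}$ lies in $[0,\,1-k\{m\alpha\})\cup[k(1-\{m\alpha\}),\,1)$, and taking $\ell_1=q_s$, $\ell_2=q_{s+1}$ with $s$ large, the opposite signs of $q_s\alpha-p_s$ and $q_{s+1}\alpha-p_{s+1}$ make these two good sets one-sided in opposite directions, so their union is the whole circle once $k(\|q_s\alpha\|+\|q_{s+1}\alpha\|)\leq 1$. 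The covering step you flagged as the main obstacle in (4) therefore closes exactly as you sketched, and in fact more cleanly than via the three-distance theorem, since the good set for each period is given by the explicit two-interval formula above.
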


\noindent 
Note that the situation in Theorem~\ref{T:ab-k pow in Sturm} is different than for usual powers. Indeed every Sturmian word begins with infinitely many square, but not necessarily with a cube \cite{BerHolZam}.
Sturmian words are optimal in the following sense: 
In the next theorem, we say that $w$ has abelian period $p$ if $w=x_1 x_2 \cdots x_n$ for some  pairwise abelian equivalent words $x_i$ with
$p=\abs{x_1}=\cdots=\abs{x_n}$.

\begin{theorem}\label{T:optSturm} If an infinite word $\bx$ is abelian $k$-repetitive such that every position starts with an abelian $k$-power with a fixed abelian period $m$, then $\bx$ is ultimately periodic.
\end{theorem}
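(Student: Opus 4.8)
The plan is to reduce the hypothesis to one clean identity about Parikh vectors of the length-$m$ windows of $\bx$, and then to exploit the rigidity of unit coordinate vectors. First I would write $\bx = a_0a_1a_2\cdots$ with each $a_j$ in the alphabet $A$, and for every $i\geq 0$ set
\[ v(i)=\Psi(a_ia_{i+1}\cdots a_{i+m-1}), \]
the Parikh vector of the factor of length $m$ occurring at position $i$. For a letter $a$ I write $e_a$ for the corresponding unit vector; the only algebraic fact I need is that $\Psi$ is additive under concatenation and that $e_a+e_b=2e_c$ forces $a=b=c$ (this is exactly where it matters that Parikh vectors, unlike plain lengths, record which letters occur).

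Two observations will drive the argument. First, because $\bx$ is abelian $k$-repetitive with the fixed abelian period $m$ and $k\geq 2$, the first two blocks of the abelian $k$-power starting at position $i$ are abelian equivalent, which translated into Parikh vectors reads
\[ v(i)=v(i+m)\qquad\text{for all } i\geq 0. \]
Second, the window at position $i+1$ is obtained from the window at position $i$ by deleting the front letter $a_i$ and appending $a_{i+m}$, so
\[ v(i+1)-v(i)=e_{a_{i+m}}-e_{a_i}\qquad\text{for all } i\geq 0. \]

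Then I would simply combine the two. For each $i$, the first identity gives $v(i+1)-v(i)=v(i+1+m)-v(i+m)$; applying the second identity to each side yields $e_{a_{i+m}}-e_{a_i}=e_{a_{i+2m}}-e_{a_{i+m}}$, i.e.\ $e_{a_i}+e_{a_{i+2m}}=2e_{a_{i+m}}$. By the rigidity of unit vectors this forces $a_i=a_{i+m}=a_{i+2m}$ for every $i\geq 0$, so in particular $a_i=a_{i+m}$ for all $i$ and $\bx$ is periodic of period $m$ — in fact \emph{purely} periodic, slightly stronger than the asserted ultimate periodicity.

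I expect no genuine obstacle here: the argument is a few lines with no case analysis or combinatorial bookkeeping. The one point requiring care is the use of the hypothesis — I only ever invoke abelian equivalence of the \emph{first two} blocks of each abelian $k$-power, so nothing beyond $k\geq 2$ is needed; and $k\geq 2$ is essential, since for $k=1$ every infinite word trivially satisfies the hypothesis, so "abelian $k$-repetitive" must be understood with $k\geq 2$. The other thing to double-check in the writeup is that "every position starts with" really is being applied for all $i\geq 0$ (so that $v(i)=v(i+m)$ holds at every $i$, including $i+1$), which is what makes the telescoping collapse to pointwise equality of letters.
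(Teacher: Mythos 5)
Your argument is correct and complete: the hypothesis applied at every position gives $v(i)=v(i+m)$ for all $i\geq 0$ (using only the first two blocks, so only $k\geq 2$ is needed), and combining this with the sliding-window identity $v(i+1)-v(i)=e_{a_{i+m}}-e_{a_i}$ yields $e_{a_i}+e_{a_{i+2m}}=2e_{a_{i+m}}$, which by the rigidity of unit vectors forces $a_i=a_{i+m}$ for all $i$, i.e.\ pure periodicity of period $m$ --- slightly stronger than the stated ultimate periodicity. The paper is an extended abstract and omits its proof of this theorem, but your telescoping argument is the natural one and there is nothing to add.
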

%



\begin{remark}
The property mentioned in Theorem~\ref{T:ab-k pow in Sturm} is  not characteristic for Sturmian words. Indeed if $\bt$ is a Sturmian word and $f$ is the morphism defined by $f(a) = aa$, $f(b) = ab$, the word $f(\bt)$ is not Sturmian and verifies this property. More precisely if every position of $\bt$ starts with an abelian $k$-power of abelian period either $\ell_1$ or $\ell_2$, then every position of $f(\bt)$ starts with an abelian $k$-power of abelian period either $2\ell_1$ or $2\ell_2$. Considering instead of $f$ the morphism $g$ defined by $g(a) = c_1c_2\cdots c_na$ and $g(b) =c_1c_2\cdots c_nb$  with $c_1, \ldots, c_n$ letters, one can find non ultimately periodic words over arbitrary alphabet having the previous property. 
\end{remark}

We end this section with two further results on abelian repetitions:

\begin{theorem}\label{T:prefix2} For all integers $k\geq 1$, each suffix of the Tribonacci word
begins in an infinite number of abelian $k$-powers.
\end{theorem}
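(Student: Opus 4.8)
\textbf{Proof proposal for Theorem~\ref{T:prefix2}.}

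The plan is to exploit the self-similar structure of the Tribonacci word under the morphism $\tau$, together with the fact (already recorded in the proof of Theorem~\ref{triboabcom}) that $\tribo$ has a unique right special factor $\tribo^<_{n-1}$ of each length $n-1$ and is $2$-balanced. First I would translate the task into a statement about suffixes: since the Tribonacci word is uniformly recurrent, every factor occurs infinitely often, so it suffices to produce, for each $k$, an infinite set of lengths $\ell$ (depending on $k$) such that \emph{every} position $p$ of $\tribo$ begins with an abelian $k$-power of period $\ell_p \in \{\ell, \ell', \dots\}$ drawn from a finite list — or, more modestly, to show that the prefix of $\tribo$ itself begins with infinitely many abelian $k$-powers and then bootstrap to arbitrary suffixes via recurrence. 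The cleanest route, I expect, is the former: show that for suitable $\ell$ the three ``extremal'' Parikh vectors of length $\ell$ are close enough (within abelian distance controlled by $2$-balance) that any window of length $k\ell$ decomposes into abelian-equivalent blocks.

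The key steps, in order, would be: (1) Recall the combinatorial Rauzy-graph description of $\tribo$: factors of length $n$ are obtained from the unique right special factor, and the Parikh vectors of length-$n$ factors all lie within the twelve-vector neighbourhood of $\mathrm{Central}(n)$ pictured in Figure~\ref{graph}. (2) Identify a good family of lengths — natural candidates are the Tribonacci numbers $T_m$ (the lengths $|\tau^m(0)|$), or sums of consecutive ones, because at these lengths the block structure of $\tribo$ under iteration of $\tau$ is rigid and the relevant Parikh vectors coincide exactly. Concretely, $\tribo = \tau^m(\tribo)$, and $\tau^m$ sends each letter to a block of length $T_{m}$, $T_{m-1}$, or $T_{m-2}$; grouping blocks appropriately one gets long stretches that are exact abelian powers. (3) For a position $p$ not aligned to the block decomposition, use $2$-balance to absorb the ``boundary error'': two windows of equal length $\ell$ differ in each letter-count by at most $2$, so by choosing $\ell$ large (along the chosen family) and invoking the explicit structure one forces genuine abelian equality rather than mere closeness — this is where one shows the abelian period can be taken from a finite list. (4) Since every suffix of $\tribo$ is itself determined by a starting position, and the statement produced in (1)--(3) is ``every position begins in an abelian $k$-power (with period in a finite, $k$-dependent list)'', letting the length of the suffix grow gives infinitely many such abelian $k$-powers beginning at each fixed suffix, completing the proof.

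I expect the main obstacle to be step (3): controlling the boundary mismatch precisely. The $2$-balance of $\tribo$ only tells us Parikh vectors of equal-length factors are within $\ell^\infty$-distance $2$, which is \emph{not} by itself abelian equivalence; one genuinely needs the finer periodic structure of the Rauzy graph (the exact recursion $\tribo^<_n$ and the positions of the three continuations) to pin down when consecutive windows of length $\ell$ have \emph{identical} Parikh vectors, and to show this happens for infinitely many $\ell$ simultaneously for every starting position. A convenient way to organise this is to prove a lemma of the form: for each $m$ there is a length $\ell_m$ (say a partial sum of Tribonacci numbers) such that for every $i$, $\Psi(\tribo_i \cdots \tribo_{i+\ell_m-1}) = \Psi(\tribo_{i+\ell_m} \cdots \tribo_{i+2\ell_m-1})$, i.e.\ $\tribo$ has an abelian square of period $\ell_m$ at every position; iterating the same kind of argument (or concatenating $k$ such blocks and checking pairwise equivalence, again via the graph of Figure~\ref{graph}) upgrades squares to $k$-powers. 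Once that lemma is in hand, uniform recurrence does the rest and the theorem follows. A full verification of the lemma is the routine-but-lengthy part and would be carried out by induction on $m$ using $\tribo = \tau(\tribo)$ and the action of $\tau$ on Parikh vectors (the incidence matrix of $\tau$).
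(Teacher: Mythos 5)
There is a fatal gap: the key lemma you propose --- a single length $\ell_m$ such that $\Psi(\tribo_i\cdots\tribo_{i+\ell_m-1})=\Psi(\tribo_{i+\ell_m}\cdots\tribo_{i+2\ell_m-1})$ for \emph{every} position $i$ --- is false, and it collides with Theorem~\ref{T:optSturm} of this very paper. To see this directly, set $g(i)=\Psi(\tribo_i\cdots\tribo_{i+\ell-1})$. Your lemma says $g(i)=g(i+\ell)$ for all $i$. Since $g(j+1)-g(j)=\Psi(\tribo_{j+\ell})-\Psi(\tribo_j)$, applying the lemma at $i$ and at $i+1$ gives $\Psi(\tribo_{i+2\ell})-\Psi(\tribo_{i+\ell})=\Psi(\tribo_{i+\ell})-\Psi(\tribo_i)$, so $j\mapsto\Psi(\tribo_{i+j\ell})$ is an arithmetic progression of standard basis vectors; being bounded, it is constant, hence $\tribo_{i+j\ell}=\tribo_i$ for all $i,j$ and $\tribo$ would be periodic with period $\ell$. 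This is exactly the phenomenon recorded in Theorem~\ref{T:optSturm}: an aperiodic word cannot have an abelian $k$-power with one fixed abelian period starting at every position; even for Sturmian words, Theorem~\ref{T:ab-k pow in Sturm} must allow \emph{two} admissible periods. So any correct argument must let the abelian period vary with the starting position, and your proposal supplies no mechanism for producing such a position-dependent family.

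The underlying problem is that your only quantitative inputs are $2$-balance and the special-factor/Parikh-vector picture of Figure~\ref{graph}, and these cannot suffice. Balance gives $\bigl||U|_a-|V|_a\bigr|\leq 2$ for equal-length factors independently of the length, and no choice of ``large $\ell$ along a good family'' upgrades this uniform closeness to the exact equality of Parikh vectors that an abelian power requires; the discrepancy does not decay. Indeed, Section~\ref{s:ab-powers} explicitly leaves open whether every position of a word with bounded abelian complexity begins in an abelian $k$-power --- unknown even for the Thue--Morse word beyond $k=6$ --- so an argument using only balance and the Rauzy-graph structure would prove too much. What is actually needed is the finer arithmetic/dynamical structure of $\tribo$ (the behaviour of $\Psi(\tribo_0\cdots\tribo_{n-1})-n\alpha$, $\alpha$ the frequency vector, equivalently the associated translation on the two-torus), which is the machinery developed in \cite{RichommeSaariZamboni2009tribo}. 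Finally, your fallback reduction via recurrence is also insufficient: recurrence of a prefix that is an abelian $k$-power yields infinitely many positions beginning in an abelian $k$-power, not that \emph{every} suffix does.
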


We do not know whether this holds for all words in the subshift generated by
the Tribonacci word. For the Thue-Morse word, we have

\begin{theorem}
Each suffix of the Thue-Morse word begins in an abelian $6$-power.
\end{theorem}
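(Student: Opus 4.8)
The plan is to analyze the Parikh vectors of factors of the Thue-Morse word $\mathbf{TM}_0$ and locate, starting at every position, six consecutive blocks of a common length that are pairwise abelian equivalent. The natural length to try is a multiple of a power of two, since the Thue-Morse word has strong self-similar structure under $\mu$. Concretely, I would look for an abelian $6$-power of the form $U_1U_2\cdots U_6$ with $|U_i| = \ell$ for a suitably chosen small $\ell$, and I would exploit the fact (used already in Section~\ref{S:TM}) that the image under $\mu$ of any binary word is $2$-balanced, together with the finer fact that consecutive length-$2$ blocks of $\mu(\bw')$ aligned to even positions are each either $01$ or $10$, hence all abelian equivalent. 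This last observation alone shows that $\mu(\bw')$ read from an even position is an abelian $k$-power of period $2$ for every $k$; the whole difficulty is handling the \emph{odd} starting positions.

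First I would set up notation: write $\mathbf{TM}_0 = \mu(\mathbf{TM}_0)$ and, for an arbitrary position $m$, consider the factor $w = \mathbf{TM}_0[m..m+6\ell-1]$. If $m$ is even, the remark above finishes the case with $\ell$ essentially arbitrary (e.g. $\ell = 1$ already works if one is content with period $2$ and even start, but to get a \emph{uniform} statement over all starting positions one wants one fixed $\ell$). For odd $m$, I would use the factorization $\mathbf{TM}_0 = \mu(\mathbf{TM}_0)$ to write the window $[m..m+6\ell-1]$ as $a\,\mu(v)\,b$ where $a$ is a single letter (the second half of some $\mu$-image of a letter), $v$ is a factor of $\mathbf{TM}_0$, and $b$ is a single letter (the first half of another $\mu$-image); then $\Psi$ of the window equals $\Psi(a) + \Psi(b) + \Psi(\mu(v)) = \Psi(a) + \Psi(b) + (|v|,|v|)$. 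The point is that cutting this window into six equal blocks, the interior blocks have balanced Parikh vectors while the two boundary blocks are perturbed by at most one unit, so after passing to a length that is a multiple of a large enough power of two the perturbation can be absorbed.

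The key computational step is to choose $\ell$ so that the six blocks come out exactly abelian equivalent, not merely within distance $1$. I expect this forces $\ell$ to be divisible by a fixed power of $2$ and large enough that there is room to slide the cut points: the strategy is to take $\ell = 2^t$ for a small explicit $t$ (the numerology in the literature suggests $\ell$ on the order of $2^3$ or $2^4$), verify by the recursive $\mu$-decomposition that within a window of length $6\ell$ starting at \emph{any} position one can find six consecutive length-$\ell$ subblocks each having the same number of $0$'s — this reduces, after peeling off at most two applications of $\mu$, to a finite check on short factors of $\mathbf{TM}_0$. Since "each suffix begins in" is the statement, one then notes that $\mathbf{TM}_0$ is uniformly recurrent, so it suffices to exhibit such a configuration beginning at every position in a window whose width is the recurrence bound for factors of length $6\ell$; equivalently, one does the finite case analysis on the $2^t$ residue classes of $m$ modulo $2^t$ using the $\mu^t$-structure.

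The main obstacle, as flagged above, is the odd (more generally, non-$2^t$-aligned) starting positions: there the two end blocks of the candidate $6$-power are genuinely unbalanced relative to the middle ones, and one must verify that a single fixed $\ell$ simultaneously works for \emph{all} residue classes of the start position modulo a power of $2$. I would handle this by induction on the number of times one applies the $\mu$-decomposition: each application reduces a window of length $6\ell$ with $\ell$ even to a window of length $3\ell$ in $\mathbf{TM}_0$ plus two boundary letters whose combined Parikh contribution is a controlled vector, and after enough steps $\ell$ shrinks to a base case ($\ell = 1$ or $2$) that is checked by direct inspection of the first several dozen letters of $\mathbf{TM}_0$. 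The honest statement of the result — a $6$-power, not a $5$-power — presumably reflects exactly how much boundary slack one needs to absorb, so I would expect the proof to also implicitly show $6$ is optimal for a uniform-period argument of this kind, though the theorem as stated only asserts existence.
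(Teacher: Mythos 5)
Your proposal is a strategy outline rather than a proof: the exponent $t$, the period $\ell$, and the promised ``finite check'' are never actually produced, and nothing in the argument explains why the exponent $6$ is attainable while $7$ is left open by the paper. More seriously, the core of the plan is provably unworkable. You aim to exhibit a \emph{single fixed} abelian period $\ell=2^t$ such that every position of $\mathbf{TM}_0$ begins with an abelian $6$-power of that period, verified by a finite case analysis over residue classes of the starting position modulo $2^t$. By Theorem~\ref{T:optSturm}, an infinite word in which every position begins with an abelian $k$-power of one fixed abelian period is ultimately periodic; since the Thue--Morse word is aperiodic, no such fixed $\ell$ can exist. The obstruction persists even if you treat the even positions separately (where period $2$ indeed works, as you note) and seek a fixed even period $2\ell$ only for the odd positions: writing $t=\mathbf{TM}_0$, the block of length $2\ell$ starting at position $2i+1$ is $\bar{t_i}\,\mu(t_{i+1}\cdots t_{i+\ell-1})\,t_{i+\ell}$ (where $\bar a$ is the complementary letter), so its Parikh vector is $(\ell-1,\ell-1)+\Psi(\bar{t_i})+\Psi(t_{i+\ell})$, and six consecutive such blocks are pairwise abelian equivalent precisely when $t_i=t_{i+\ell}=\cdots=t_{i+6\ell}$. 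Demanding this for every $i$ with one fixed $\ell$ forces $t_i=t_{i+\ell}$ for all $i$, i.e.\ periodicity of $\mathbf{TM}_0$. Hence the abelian period must genuinely depend on the starting position, and the real content of the theorem is the existence, for each $i$, of a difference $\ell=\ell(i)$ making $t_i,t_{i+\ell},\ldots,t_{i+6\ell}$ a monochromatic arithmetic progression (or the analogous condition for odd periods). Your proposal never engages with this, which is exactly where the number $6$ versus $7$ is decided.

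Two further points. First, the step ``within a window of length $6\ell$ starting at any position one can find six consecutive length-$\ell$ subblocks'' would only show that the suffix \emph{contains} an abelian $6$-power, whereas the theorem asserts that it \emph{begins} in one; the subsequent appeal to uniform recurrence does not repair this, since beginning in an abelian power is a property of the position, not of the factor occurring there. Second, the heuristic that the boundary perturbation ``can be absorbed by taking $\ell$ divisible by a large power of $2$'' is exactly what the computation above refutes: the boundary contribution $\Psi(\bar{t_{i+j\ell}})+\Psi(t_{i+(j+1)\ell})$ does not average out with growing $\ell$; it must be controlled combinatorially for each starting position.
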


We  do not know whether this holds for abelian $7$-powers.


\bibliographystyle{plain}
\bibliography{RSZ}

\begin{thebibliography}{10}

\bibitem{AbeBrl2}
S.~Brlek A.~Aberkane.
\newblock Suites de m{\^e}me complexit\'e que celle de {T}hue-{M}orse.
\newblock In {\em Actes des Journ{\'e}es Montoises d'informatique th{\'e}orique
  (9-11 septembre 2002, Montpellier, France)}, 2002.

\bibitem{AB}
B.~Adamczewski and Y.~Bugeaud.
\newblock On the complexity of algebraic numbers {I}. {E}xpansions in integer
  bases.
\newblock {\em Ann. of Math. (2)}, 165:547--565, 2007.

\bibitem{AlDaQuZa}
J.-P. Allouche, J.~L. Davison, M.~Queffelec, and L.~Q. Zamboni.
\newblock Transcendence of {S}turmian or morphic continued fractions.
\newblock {\em J. Number Theory}, 9(1):39--66, 2001.

\bibitem{AlloucheShallit2003book}
J.-P. Allouche and J.~Shallit.
\newblock {\em Automatic sequences}.
\newblock Cambridge University Press, 2003.

\bibitem{AlZa}
J.-P. Allouche and L.~Q. Zamboni.
\newblock Algebraic irrational binary numbers cannot be fixed points of
  non-trivial constant length or primitive morphisms.
\newblock {\em J. Number Theory}, 69:119--124, 1998.

\bibitem{Arnoux2008}
P.~Arnoux.
\newblock private communication, 2008.

\bibitem{BerstelLauveReutenauerSaliola2008book}
J.~Berstel, A.~Lauve, C.~Reutenauer, and F.~Saliola.
\newblock {\em Combinatorics on {W}ords: {C}hristoffel {W}ords and
  {R}epetitions in {W}ords}, volume~27 of {\em CRM Monograph Series}.
\newblock American Mathematical Society, 2008.

\bibitem{BerSee2002}
J.~Berstel and P.~S\'e\'ebold.
\newblock Sturmian words.
\newblock In M.~Lothaire, editor, {\em Algebraic Combinatorics on Words},
  volume~90 of {\em Encyclopedia of Mathematics and its Applications}, pages
  45--110. Cambridge University Press, 2002.

\bibitem{BerHolZam}
V.~Berth\'e, C.~Holton, and L.~Q. Zamboni.
\newblock Initial powers of {S}turmian sequences.
\newblock {\em Acta Arith.}, 122:315--347, 2006.

\bibitem{CassFerZam}
J.~Cassaigne, S.~Ferenczi, and L.Q. Zamboni.
\newblock Imbalances in {A}rnoux-{R}auzy sequences.
\newblock {\em Ann. Inst. Fourier (Grenoble)}, 50(4):1265--1276, 2000.

\bibitem{ChoKar1997}
C.~Choffrut and J.~Karhum{\"a}ki.
\newblock Combinatorics of words.
\newblock In G.~Rozenberg and A.~Salomaa, editors, {\em Handbook of Formal
  Languages}, volume~1, pages 329--438. Springer-Verlag, 1997.

\bibitem{CovHed}
E.~M. Coven and G.~A. Hedlund.
\newblock Sequences with minimal block growth.
\newblock {\em Math. Systems Theory}, 7:138--153, 1973.

\bibitem{Dekking1979}
F.M. Dekking.
\newblock Strongly non-repetitive sequences and progression-free sets.
\newblock {\em J. Comb. Theory Ser. A}, 27(2):181--185, 1979.

\bibitem{Hu}
P.~Hubert.
\newblock Suites \'equilibr\'ees.
\newblock {\em Theoret . Comput. Sci.}, 242:91--108, 2000.

\bibitem{KaboreTapsoba2008TIA}
I.~Kabor\'e and T.~Tapsoba.
\newblock Combinatoires des mots de complexit\'e $n+2$.
\newblock {\em RAIRO Theor. Inform. Appl.}, 41:425--446, 2007.

\bibitem{Keranen1992ICALP}
V.~Ker\"anen.
\newblock Abelian {S}quares are {A}voidable on 4 {L}etters.
\newblock In W.~Kuich, editor, {\em Proceedings of ICALP'1992 (International
  Conference on Automata, Languages and Programming - Vienna 1992)}, volume 623
  of {\em Lecture Notes in Computer Science}, pages 41--52. Springer, Berlin,
  1992.

\bibitem{Lothaire1983book}
M.~Lothaire.
\newblock {\em Combinatorics on {W}ords}, volume~17 of {\em Encyclopedia of
  Mathematics and its Applications}.
\newblock Addison-Wesley, 1983.
\newblock Reprinted in the {\em Cambridge Mathematical Library}, Cambridge
  University Press, UK, 1997.

\bibitem{Lothaire2002}
M.~Lothaire.
\newblock {\em Algebraic {C}ombinatorics on {W}ords}, volume~90 of {\em
  Encyclopedia of Mathematics and its Applications}.
\newblock Cambridge University Press, 2002.

\bibitem{Lothaire2005}
M.~Lothaire.
\newblock {\em Applied {C}ombinatorics on {W}ords}, volume 105 of {\em
  Encyclopedia of Mathematics}.
\newblock Cambridge University Press, Cambridge, UK, 2005.

\bibitem{MorHed1940}
M.~Morse and G.A. Hedlund.
\newblock Symbolic {D}ynamics {II}: {S}turmian trajectories.
\newblock {\em Amer. J. Math.}, 62(1):1--42, 1940.

\bibitem{Pytheas2002}
N.~Pytheas~Fogg.
\newblock {\em Substitutions in {D}ynamics, {A}rithmetics and {C}ombinatorics},
  volume 1794 of {\em Lecture Notes in Mathematics}.
\newblock Springer, 2002.
\newblock (V. Berth{\'e}, S. Ferenczi, C. Mauduit, A. Siegel, editors).

\bibitem{Rauzy2}
G.~Rauzy.
\newblock Suites \`a termes dans un alphabet fini.
\newblock {\em S\'eminaire de Th\'eorie des nombres de Bordeaux}, expos\'e
  25:1--16, 1982-1983.

\bibitem{RichommeSaariZamboni}
G.~Richomme, K.~Saari, and L.Q. Zamboni.
\newblock Standard {F}actors of {S}turmian {W}ords.
\newblock Submitted, 2008.

\bibitem{RicSaaZam2009prep}
G.~Richomme, K.~Saari, and L.Q. Zamboni.
\newblock Abelian properties of words.
\newblock Manuscript, 2009.

\bibitem{RichommeSaariZamboni2009tribo}
G.~Richomme, K.~Saari, and L.Q. Zamboni.
\newblock Balance and abelian complexity of the {T}ribonacci word.
\newblock Manuscript, 2009.

\bibitem{RichommeSeebold2004IJFCS}
G.~Richomme and P.~S\'e\'ebold.
\newblock Conjectures and results on morphisms generating k-power-free words.
\newblock {\em Internat. J. Found. Comput. Sci.}, 15(2 (Special issue
  Combinatorics on words)):307--316, 2004.

\bibitem{RichommeWlazinski2007DAM}
G.~Richomme and F.~Wlazinski.
\newblock Existence of finite test-sets for k-power-freeness of uniform
  morphisms.
\newblock {\em Discrete Appl. Math.}, 155(15):2001--2016, 2007.

\bibitem{Thue1906}
Axel Thue.
\newblock {\"U}ber unendliche {Z}eichenreihen.
\newblock {\em Kra. Vidensk. Selsk. Skrifter, I. Mat. Nat. Kl.}, 1906(7):1--22,
  1906.

\bibitem{Thue1912}
Axel Thue.
\newblock {\"U}ber die gegenseitige {L}age gleicher {T}eile gewisser
  {Z}eichenreihen.
\newblock {\em Kra. Vidensk. Selsk. Skrifter, I. Mat. Nat. Kl.}, 1
  Kristania:1--67, 1912.
\newblock reprinted in: T. Nagell (Ed.), Selected Mathematical Papers of Axel
  Thue, Universtetsforlaget, Oslo, 1977, pp. 413-478).

\bibitem{WeissteinWeb}
E.~W. Weisstein.
\newblock Composition.
\newblock From MathWorld--A Wolfram Web Resource.
  http://mathworld.wolfram.com/Composition.html.

\end{thebibliography}
 
\end{document}